\newtheorem{theorem}{Theorem}
\newtheorem{lemma}{Lemma}
\providecommand{\abs}[1]{\lvert#1\rvert}
\providecommand{\norm}[1]{\lVert#1\rVert}
\begin{document}

\title{Convergence of a Metropolized Integrator for Stochastic Differential Equations with Variable Diffusion Coefficient}
\author{Paul Tupper, Xin Yang\footnote{Author for correspondence (xiny@sfu.ca).}
\\ \\ Department of Mathematics, Simon Fraser University, 8888 University Drive\\
 Burnaby British Columbia, Canada V5A 1S6}
\maketitle

\section*{Abstract}
We present an explicit method for simulating stochastic differential equations (SDEs) that have variable diffusion coefficients and satisfy the detailed balance condition with respect to a known equilibrium density.
In \cite{paulxin} we proposed a framework for such systems in which, instead of a diffusion coefficient and a drift coefficient, a modeller specifies a diffusion coefficient and an equilibrium density, and then assumes detailed balance with respect to this equilibrium density. We proposed a numerical method for such systems that works directly with the diffusion coefficient and equilibrium density, rather than the drift coefficient, and uses a Metropolis-Hastings rejection process to preserve the equilibrium density exactly.
 Here we show that the method is weakly convergent with order $1/2$ for such systems with smooth coefficients.  We perform numerical experiments demonstrating the convergence of the method for systems not covered by our theorem, including systems with discontinuous diffusion coefficients and equilibrium densities.

\section{Introduction}
Consider a system of It\^{o} stochastic differential equations of the form
\begin{equation} \label{eqn:sde0}
dX=a(X)dt+b(X)dB_t,
\end{equation}
where $a$ is a vector function of $X$, $b$ is a scalar function of $X$, and $B_t$ is standard $d$-dimensional Brownian motion.
 Letting $D(X) = \frac{1}{2} b^2(X)$, the Fokker-Planck equation for this system is
\[
\frac{\partial \rho(x,t)}{\partial t}=-\nabla\cdot [a(x)\rho(x,t)]+ \Delta[D(x)\rho(x,t)]=-\nabla \cdot J,
\]
where we have defined the density flow
\[
J(x,t):=a(x)\rho(x,t)-\nabla[D(x)\rho(x,t)].
\]
If there is a density $\rho_{eq}$ such that
\[
J(x)=a(x)\rho_{eq}(x)-\nabla[D(x)\rho_{eq}(x)]=0
\]
for all $x$,
then we say the system \emph{satisfies detailed balance with respect to $\rho_{eq}$}.
A direct result of the condition is that $\rho_{eq}$ is an equilibrium density of the system. In closed, isolated physical systems, the solution satisfying the detailed balance condition is known as the thermal equilibrium distribution \cite{vankampen}.
Diffusions that satisfy the detailed balance condition with respect to some invariant measure feature prominently in many areas of physics, chemistry, and mathematical biology \cite{vankampen,gardiner,bourabee2014}.   
From the perspective of stochastic differential equations, the usual way of modelling such systems is to specify the coefficients $a(X)$ and $b(X)$ and let equation \eqref{eqn:sde0} describe the evolution of the system in time.

Instead of starting with $a(X)$ and $b(X)$, in \cite{paulxin} we proposed that the modeller specifies $D(X)$ and $\rho_{eq}(X)$ and assumes detailed balance. These assumptions are enough to uniquely determine the coefficients $a$ and $b$ of \eqref{eqn:sde0}:  we get
\begin{equation} \label{coefdefs}
a(X)= \nabla D(X)+D(X)\nabla \ln
\rho_{eq}(X), \ \ \ \ \
b(X) = \sqrt{2D(X)}.
\end{equation}
Thus the SDE  \eqref{eqn:sde0} takes the form
\begin{equation}
dX(t)=[\nabla D(X)+D(X)\nabla \ln
\rho_{eq}(X)]dt+\sqrt{2D(X)}dB_t\\
\label{sde1}
\end{equation}
with  Fokker-Planck equation
\begin{equation}
\frac{\partial \rho(x,t)}{\partial t}= \nabla \cdot \left[ D(x) \rho_{eq}(x) \nabla \left( \frac{ \rho(x,t)} {\rho_{eq}(x)}\right)\right].
\label{pde:fpe}
\end{equation}
The advantages of this change of perspective are two-fold: (i) in many circumstances it is more natural to model the system in terms of $\rho_{eq}$ and $D$, such as when $\rho_{eq}$ is available from experimental data but $a$ is not \cite{siggia},  (ii) there are situations in which $D$ and $\rho_{eq}$ are well-defined but $a$ is singular, such as when $D$ or $\rho_{eq}$ has a jump discontinuity \cite{paulxin}. In this case, defining algorithms in terms of $D$ and $\rho_{eq}$ allows us to avoid working with a singular drift $a$.

%
%
%

Since the equilibrium distribution $\rho_{eq}$ plays an important role in the system \eqref{sde1}, we want a numerical method that has an identical equilibrium distribution. As shown by \cite{roberts1996}, the standard Euler-Maruyama scheme (EM) or Milstein's method will typically not preserve the correct equilibrium density. (In fact, due to instability, these methods may not be ergodic at all even when the underlying diffusion is exponentially ergodic).
Roberts and Tweedie introduced the Metropolis-adjusted Langevin algorithm (MALA) as a way of simulating the system while  keeping the exact equilibrium distribution. Their method proposes a trial step using the Euler-Maruyama scheme  and then decides whether to accept or reject the trial step using the Metropolis-Hastings procedure with the correct known value of $\rho_{eq}$. \cite{bourabee2010} have shown that MALA is not only ergodic with respect to $\rho_{eq}$ but also converges to the solution of the SDE strongly. Our method is a variant of the MALA scheme. Instead of using a convergent scheme for the SDE, we only use the diffusion coefficient to give a trial step and then use the Metropolis-Hastings rejection procedure to guarantee the correct equilibrium density. Therefore the drift is enforced only indirectly through the rejection step. The motivation for this idea is that for any SDE the infinitesimal drift  is uniquely determined by the infinitesimal diffusion, the equilibrium distribution, and the detailed balance condition \cite{paulxin}.  Therefore, if we have a Markov chain that approximates a diffusion process with the correct diffusion coefficient and the correct equilibrium distribution, and  also satisfies the detailed balance condition,  we expect that the process also has  approximately the correct drift coefficient.
For our scheme, since the trial step is given with the correct diffusion and the Metropolis-Hastings rejection process provides the detailed balance with respect to the correct equilibrium density, we expect that it converges to the correct solution of the stochastic differential equation.  In this paper, we will show directly that the process has the correct drift and diffusion in the limit of steplength going to zero, when the coefficients are sufficiently smooth. In particular, we show that the scheme is weakly convergent with order of accuracy $1/2$ under appropriate conditions.

A similar theorem appears in \cite{bourabee2014} for general self-adjoint diffusions for a class of Metropolized integrators that includes ours as a special case. Their method consists of the use of a Runge-Kutta type integrator for the trial step followed by a Metropolis-Hastings decision to accept or reject the step. In general, their trial step uses the gradient of the diffusion coefficient, but also allows our choice of using only the diffusion coefficient itself as a special case (corresponding to $G_h = 0$ in their notation). Their more general framework also includes the possibility of using the gradient of $\ln \rho_{eq}$ to obtain a more accurate trial step.

Though our results here are for smooth coefficients, the main motivation for our scheme is to to handle instances of \eqref{sde1} where $D$ has jump discontinuities.  Other work has developed numerical schemes for similar classes of problems. The reference \cite{labolle} proposes a method for such systems that  does not make explicit use of the equilibrium distribution and hence does not preserve it exactly. However, their method could be adjusted with a Metropolis-Hastings step in order to do so. Another approach is to resolve the jump discontinuities in $D$ by developing a separate procedure for when the state of the system approaches the discontinuity. This approach is taken by \cite{etore,lejay2012,martinez_talay} for one-dimensional systems, who make use of the theory of skew Brownian motion to resolve the discontinuity.

Here we define our algorithm  from \cite{paulxin} for approximating the solution of \eqref{sde1}. Let $h$ be the step length. 
The trial step is given by
\begin{eqnarray}
X^{*}_{n+1}=X_n+\sqrt{2D(X_n)}[B((n+1)h)-B(nh)].
\label{trialstep}
\end{eqnarray}
This is accepted with probability $\alpha_h$
\begin{eqnarray}
X_{n+1}=
\left\{
\begin{array}{rl}
X^*_{n+1}, & \text{if } \xi_{n}<\alpha_h\left(X_n,X^*_{n+1}\right),\\
X_n,   &  \text{otherwise.}\\
\end{array}
\right.
\label{mhrejection}
\end{eqnarray}
where $\xi_{k}$ satisfies uniform distribution on [0,1] and
$\alpha_h$ is the acceptance probability for Metroplis-Hastings rejection procedure \cite{roberts1996}
from state $X_n$ to $X^*_{n+1}$ with the expression
\begin{equation}
\alpha_h(x,y)=\min \left(1,\frac{q_h(y,x)\rho_{eq}(y)}{q_h(x,y)\rho_{eq}(x)} \right)
\label{alpha}
\end{equation}
and $q_h(x,y)$ is the transitional probability density determining the trial step \eqref{trialstep}
\begin{equation}
q_h(x,y)=\frac{1}{(4\pi h
D(x))^{d/2}}e^{-\frac{(x-y)^2}{4hD(x)}}.
\label{qh}
\end{equation}
This choice of $\alpha_h$ and $q_h$ in the Metropolis-Hastings rejection process guarantees that the process $X_{n}, n = 0,1,2, \ldots$ satisfies detailed balance with respect to the density $\rho_{eq}$.

\section{Weak convergence of the method}

Firstly, we exhibit some sufficient conditions on $D$ and $\rho_{eq}$ for the ergodicity of the SDE \eqref{sde1} and the numerical scheme in Theorem \ref{pdeergodic} and Theorem \ref{ergodicity}. The convergence to the equilibium $\rho_{eq}$ of \eqref{pde:fpe} is shown using the idea of relative entropy and the logarithmic Sobolev inequality \cite{arnold2001}. As we show in Theorem 2, the numerical method is ergodic and has the correct equilibrium distribution because of the use of the Metropolis-Hastings method. We then show in Theorem 3 that the numerical method converges weakly with order $1/2$ for smooth $\rho_{eq}$ and $D$. \\      
We will let $H(\rho_1|\rho_2)$ be the relative entropy of $\rho_1$ with respect to $\rho_2$ where
\[
H(\rho_1|\rho_2):=\int_{\mathbb{R}^d}\rho_1(x) \ln \frac{\rho_1(x)}{\rho_2(x)}dx.
\]
The reason to use relative entropy is due to Csisz\`ar-Kullback inequality
\begin{equation}
H(\rho_1|\rho_2) \geq \frac{1}{2} \norm{\rho_1-\rho_2}^2_{L^1} 
\label{ck}
\end{equation}
Therefore, once we have convergence in the relative entropy, we will have convergence in $L^1$.
Another useful functional $I(\rho_1|\rho_2)$ called entropy dissipation functional is defined by
\[
I(\rho_1|\rho_2):=\int_{\mathbb{R}^d}\rho_1(x) \nabla \ln \frac{\rho_1(x)}{\rho_2(x)} \cdot \nabla \ln \frac{\rho_1(x)}{\rho_2(x)}dx.
\]
\begin{theorem}
Suppose
\begin{enumerate}
\item The known equilibrium density $\rho_{eq} \in C^2(\mathbb{R}^d)$ is positive $\rho_{eq}(x)>0$ and satisfies $\nabla^2 \ln \rho_{eq} \leq -\lambda I_d$, where $I_d$ is the identity matrix of dimension $d$ and $\lambda>0$ is some positive constant.
\item $H(\rho(x,0)| \rho_{eq}(x))<\infty$. i.e. the initial condition of \eqref{pde:fpe} has finite relative entropy with respect to the equilibrium density $\rho_{eq}$.
\item The diffusion coefficient $D \in C^2(\mathbb{R}^d)$ and $D$ is bounded below by some positive number: $\inf D(x) = D_{\min}>0$.
\item The surface integral  
\[
\int_{\abs{x}=R} D \rho_{eq} \left| \nabla \frac{\rho}{\rho_{eq}} \right| dx 
\]
vanishes as $R \rightarrow +\infty$.  
\end{enumerate}
then $\rho(x,t)$ converges to $\rho_{eq}$ exponentially fast in relative entropy.
\[
H(\rho(x,t)|\rho_{eq}) \leq e^{-2t\lambda D_{\min}}H(\rho(x,0)|\rho_{eq})
\]
Hence, $\rho(x,t)\rightarrow \rho_{eq}(x)$ in $L^1$ as $t\rightarrow \infty$.
\label{pdeergodic}
\end{theorem}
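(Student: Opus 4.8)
The plan is to follow the classical entropy–dissipation (Bakry--Émery) method. First I would differentiate $t\mapsto H(\rho(\cdot,t)|\rho_{eq})$ along the flow \eqref{pde:fpe}. Writing $u=\rho/\rho_{eq}$ and using conservation of mass $\int_{\mathbb{R}^d}\partial_t\rho\,dx=0$, a short computation gives
\[
\frac{d}{dt}H(\rho(t)|\rho_{eq})=\int_{\mathbb{R}^d}\partial_t\rho\,\ln u\,dx=\int_{\mathbb{R}^d}\nabla\cdot\left(D\rho_{eq}\nabla u\right)\ln u\,dx.
\]
Integrating by parts and discarding the surface term by hypothesis 4 yields
\[
\frac{d}{dt}H(\rho(t)|\rho_{eq})=-\int_{\mathbb{R}^d}D\rho_{eq}\,\nabla u\cdot\nabla\ln u\,dx=-\int_{\mathbb{R}^d}D\rho\,\abs{\nabla\ln u}^2\,dx,
\]
where in the last step I used $\nabla\ln u=u^{-1}\nabla u$, so that $\nabla u\cdot\nabla\ln u=u\abs{\nabla\ln u}^2$. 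Since $D\geq D_{\min}>0$ by hypothesis 3, this produces the dissipation bound $\frac{d}{dt}H(\rho(t)|\rho_{eq})\leq-D_{\min}\,I(\rho(t)|\rho_{eq})$.

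Next I would invoke the logarithmic Sobolev inequality for $\rho_{eq}$. Hypothesis 1 states $\nabla^2\ln\rho_{eq}\leq-\lambda I_d$ with $\lambda>0$; writing $\rho_{eq}=e^{-V}$ this is the uniform convexity bound $\nabla^2 V\geq\lambda I_d$, so by the Bakry--Émery criterion (see \cite{arnold2001}) the measure $\rho_{eq}\,dx$ satisfies a logarithmic Sobolev inequality which, in the notation of the paper, reads
\[
H(\rho|\rho_{eq})\leq\frac{1}{2\lambda}\,I(\rho|\rho_{eq})
\]
for all probability densities $\rho$ of finite relative entropy and finite entropy dissipation. Combining this with the dissipation bound gives the differential inequality $\frac{d}{dt}H(\rho(t)|\rho_{eq})\leq-2\lambda D_{\min}\,H(\rho(t)|\rho_{eq})$, and Grönwall's lemma yields $H(\rho(t)|\rho_{eq})\leq e^{-2\lambda D_{\min}t}H(\rho(0)|\rho_{eq})$, which is finite by hypothesis 2. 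The $L^1$ statement follows at once from the Csisz\`ar--Kullback inequality \eqref{ck}.

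The main obstacle is analytic rigor rather than the formal identity: one must know that $\rho(\cdot,t)$ is smooth enough and decays fast enough together with $\nabla(\rho/\rho_{eq})$ to justify differentiation under the integral sign and, above all, the integration by parts — precisely what hypothesis 4 is tailored to, by forcing the boundary flux $\int_{\abs{x}=R}D\rho_{eq}\abs{\nabla(\rho/\rho_{eq})}\,dx\to 0$. A fully rigorous treatment would typically work on exhausting balls (or along a regularized flow), establish the entropy–dissipation identity there, and pass to the limit using monotone/dominated convergence together with the a priori dissipation bound; one also checks separately that $\rho(\cdot,t)$ remains a probability density, so that $H\geq 0$ and the Csisz\`ar--Kullback inequality apply, and that $I(\rho(t)|\rho_{eq})<\infty$ for $t>0$ — itself controlled along the evolution by the dissipation identity — so that the logarithmic Sobolev inequality is applicable at each time. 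Apart from these standard analytic points, the argument is the usual entropy method.
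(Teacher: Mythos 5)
Your proposal is correct and follows essentially the same route as the paper: compute the entropy dissipation along the flow \eqref{pde:fpe} (the paper writes $H$ as $\int \phi(g)\rho_{eq}\,dx$ with $\phi(g)=g\ln g-g+1$, which is equivalent to your use of mass conservation), discard the boundary term via hypothesis 4, bound the dissipation below by $D_{\min}\,I(\rho|\rho_{eq})$, apply the logarithmic Sobolev inequality furnished by the uniform convexity of $-\ln\rho_{eq}$, and conclude with Gr\"onwall and Csisz\`ar--Kullback. No substantive differences.
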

\begin{proof}
Let $g=\frac{\rho}{\rho_{eq}}$. Assuming $\rho$ is a solution to \eqref{pde:fpe}, $g$ will satisfy
\[
\frac{\partial g}{\partial t}=\frac{\nabla \cdot \left( D \rho_{eq} \nabla g \right)}{\rho_{eq} }.
\] 
Let $\phi(g)=g\ln g -g+1$, then through direct calculation, $H(\rho|\rho_{eq})=\int_{\mathbb{R}^d} \phi(g) \rho_{eq}dx$ and 
\[
\frac{d}{dt} H(\rho|\rho_{eq})= \int_{\mathbb{R}^d} \frac {\partial \phi(g)}{\partial g} \frac{\partial g}{\partial t}\rho_{eq}dx=- \int_{\mathbb{R}^d} D \rho \left( \nabla \ln \frac{\rho}{\rho_{eq}} \cdot \nabla \ln \frac{\rho}{\rho_{eq}} \right) dx \leq - D_{\min} \cdot I(\rho(x,t)|\rho_{eq}(x))
\]
where the surface integral from integration by parts vanishes because of condition 3.  
By Theorem 1 in \cite{markowich}, condition 1 here guarantees that the logarithmic Sobolev inequality with parameter $\lambda$ holds
\[
H(\rho|\rho_{eq}) \leq \frac{1}{2\lambda} I(\rho | \rho_{eq}).
\]
As a result,
\[
\frac{d}{dt} H(\rho|\rho_{eq}) \leq -2\lambda D_{\min} H(\rho|\rho_{eq})
\]
We get the exponential convergence in relative entropy which will imply exponential convergence in $L^1$ by \eqref{ck}.
\end{proof}

{\bf Remark}: Theorem \ref{pdeergodic} also works when $\rho_{eq}$ is only positive in some connected open set $\mathcal{D}\subset \mathbb{R}^d$ provided that the condition 4 is replaced by zero-flux boundary conditions on $\partial \mathcal{D}$. By restricting the domain inside the region, $\rho_{eq}$ will be strictly positive inside the domain and there's no problem of dividing by zero. A discussion about relaxing the uniform convexity of $\nabla^2 \ln \rho_{eq}$ in condition 1 could be found in \cite{markowich}. 

\begin{theorem}
Suppose the diffusion coefficient $D$ is bounded below by some positive number: $\inf D (x)>0$ and suppose $\nu$ is the equilibrium probability distribution with density $\rho_{eq}$. Let the numerical scheme defined in \eqref{trialstep}, \eqref{mhrejection} generate a Markov chain with n-step transitional probability distribution $P^n(x,\cdot)$.  Then $P^n(x,\cdot)$ converges to the equilibrium probability distribution $\nu(\cdot)$ in total variation norm as $n\rightarrow \infty$ i.e. :
\[
\sup \{ \left| P^n(x,A) -\nu(A)\right| : \mbox{for all } \mbox{measurable set } A \}   \rightarrow 0\mbox{ , uniformly in x} 
\]
\label{ergodicity}
\end{theorem}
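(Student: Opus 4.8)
The plan is to apply the classical ergodic theory for Markov chains on general state spaces. It suffices to verify that $\nu$ is an invariant probability measure, that the chain is $\nu$-irreducible and aperiodic, and that a minorization condition holds: invariance together with $\nu$-irreducibility, aperiodicity and Harris recurrence already give $\norm{P^n(x,\cdot)-\nu}_{\mathrm{TV}}\to 0$ for every starting point $x$, while a \emph{uniform} minorization (Doeblin's condition) upgrades this to convergence that is uniform in $x$, with a geometric rate.

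Invariance is immediate and is essentially already noted: the pair $(q_h,\alpha_h)$ of \eqref{qh}--\eqref{alpha} is precisely the Metropolis--Hastings prescription, so $P$ satisfies detailed balance with respect to $\rho_{eq}$ and hence $\nu P=\nu$. For $\nu$-irreducibility, since $D\ge D_{\min}>0$ the proposal density $q_h(x,\cdot)$ is a Gaussian with strictly positive variance, so $q_h(x,y)>0$ for all $x$ and all $y\in\mathbb{R}^d$; and $\alpha_h(x,y)>0$ wherever $\rho_{eq}(y)>0$, so the accepted part of the kernel has a strictly positive density on $\{\rho_{eq}>0\}$ and $P(x,A)>0$ for every $x$ and every measurable $A$ with $\nu(A)>0$. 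Aperiodicity follows from the rejection mechanism --- from a positive-$\nu$-measure set of states the proposal lands in a region of smaller target density with positive probability, so the chain holds its position with positive probability --- and the standard arguments for $\nu$-irreducible Metropolis chains (cf.\ \cite{roberts1996}) also give Harris recurrence, so convergence in total variation from each fixed $x$ is in hand.

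The substantive step, which I expect to be the main obstacle, is the \emph{uniform} minorization: exhibiting $m\ge1$, $\epsilon>0$ and a probability measure $\phi$ with $P^{m}(x,\cdot)\ge\epsilon\,\phi(\cdot)$ for \emph{all} $x\in\mathbb{R}^d$, whereupon Doeblin's theorem yields $\sup_x\norm{P^n(x,\cdot)-\nu}_{\mathrm{TV}}\le(1-\epsilon)^{\lfloor n/m\rfloor}\to 0$, which is the assertion. A natural candidate is to take $\phi$ to be $\nu$ restricted and renormalized to a fixed ball $B$ on which $\rho_{eq}$ is bounded away from zero; since the accepted part of $P(x,\cdot)$ has density $\min\!\bigl\{q_h(x,y),\,q_h(y,x)\rho_{eq}(y)/\rho_{eq}(x)\bigr\}$ at $y$, what is needed is a lower bound on this quantity, uniform in $x$, as $y$ ranges over $B$. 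The factors $q_h(y,x)$ and $\rho_{eq}(y)$ are readily bounded below for $y\in B$, but controlling the Gaussian factor $e^{-\abs{x-y}^2/(4hD(x))}$ as $\abs{x}\to\infty$ --- where it competes against the growth of $1/\rho_{eq}(x)$, i.e.\ the tails of the proposal against the tails of the equilibrium density --- is delicate, and is exactly where one must use the behaviour of $D$ and $\rho_{eq}$ at infinity (or, in the bounded-domain variant of the remark after Theorem \ref{pdeergodic}, the compactness of the state space, which makes $q_h$ bounded below outright). Once such a $\phi$ is in place the claimed uniform convergence follows immediately.
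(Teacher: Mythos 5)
Your verification of invariance, $\nu$-irreducibility and aperiodicity is exactly the content of the paper's proof: the paper observes that the Gaussian proposal assigns positive probability to every set of positive Lebesgue measure, that $\alpha_h(x,y)>0$ wherever $\rho_{eq}(y)>0$, and then appeals to the cited references for the total-variation convergence. Up to that point your argument and the paper's coincide, and both are fine.

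Where you go further is in treating the uniformity in $x$ as a separate, substantive step requiring a Doeblin minorization $P^m(x,\cdot)\ge\epsilon\,\phi(\cdot)$. You are right that irreducibility, aperiodicity and Harris recurrence only yield $\norm{P^n(x,\cdot)-\nu}_{\mathrm{TV}}\to 0$ for each fixed $x$, and that the uniform statement is equivalent to such a minorization; but you do not actually construct $\phi$, so as a proof of the theorem as literally written your argument is incomplete. You should be aware, though, that the paper's own proof stops at exactly the same point --- it establishes irreducibility and aperiodicity and never addresses the uniformity --- so you have correctly isolated the one part of the claim that neither argument delivers. Your instinct that the minorization is delicate is well founded: for a random-walk-type Metropolis chain on an unbounded state space the factor $e^{-\abs{x-y}^2/(4hD(x))}$ cannot be bounded below uniformly in $x$ for $y$ ranging over a fixed ball (with $D$ bounded above this tends to $0$ as $\abs{x}\to\infty$), and such chains are generically \emph{not} uniformly ergodic. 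The uniform version of the conclusion therefore requires additional hypotheses, e.g.\ that the support of $\rho_{eq}$ be bounded (as in the remark after Theorem \ref{pdeergodic}) or suitable control of the tails of $\rho_{eq}$ and $D$. In short: everything you prove, the paper proves; the step you flag as the main obstacle is genuinely open in both treatments, and you were right not to paper over it.
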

\begin{proof}
The proof follows from \cite{jarner2000,smith1993}. We only need to show that the chain generated by the numerical method is $\rho_{eq}$-irreducible and aperiodic. These two conditions are satisfied since 1) our proposal step is given by Gaussian random variables which gives a positive probability to any set with positive Lebesgue measure, 2) the acceptance rate $\alpha_h(x,y)$ in Metropolis-Hastings rejection step will always be positive as long as $\rho_{eq}(y)$ is positive. Hence the transitional distribution of the Markov chain with rejections generated by the numerical method will have a positive probability of jumping into any set where $\rho_{eq}$ is positive. 
\end{proof}

Now we show the main result of this paper that the scheme is weakly convergent. We rewrite the time stepping of the scheme in the form
\[
X_{n+1}=X_n+\bar A(X_n,h;X^*_{n+1},\xi_{n})
\]
where
\[
\bar{A}(X_n,h;X^*_{n+1},\xi_{n})=(X^*_{n+1}-X_n) \mathds{1}_{\xi_{n}<\alpha_h\left(X_n,X^*_{n+1}\right)}
\]
is the increment of the numerical scheme in a single step. We shall use $A$ to denote the increment of the exact solution in a single step.

\begin{theorem}[Weak convergence of the scheme]
Suppose that
\begin{enumerate}
\item The diffusion coefficient $D$ and the logarithm of the equilibrium density $\ln \rho_{eq}(x)$ have bounded derivatives up to order $4$.
\item $\norm{\nabla^2 D(x)}$ and $\norm{\nabla^2 \ln \rho_{eq}(x)}$ can be bounded by some polynomial in $x$ and the diffusion coefficient $D(x)$ is bounded away from zero: $\inf(D(x)) >0$.
\item the function $f(x)$ together with its partial derivatives of order up to and including $3$ have at most polynomial growth.
\end{enumerate}
We assume the initial condition $X(0)=x_0$ is fixed. For uniform discretization $t_k=hk$, $k=1,...,N$ with $t_N=T$ the total time, the following inequality holds for all $k$:
\[
\abs{\mathbb{E}(f(X(t_k))-f(X_k))}\leq C h^{1/2}.
\]
\label{mainthm}
\end{theorem}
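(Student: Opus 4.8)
The plan is to reduce the global weak error to one-step moment estimates by the classical argument (the ``fundamental theorem'' on weak convergence of one-step schemes; see also \cite{bourabee2014}, of which the present method is a special case). Writing $\bar A$ for the one-step increment of the scheme started from a point $x$ and $A$ for that of the exact solution of \eqref{sde1}, weak order $1/2$ follows once one shows the local estimates
\[
\abs{\mathbb{E}\,\bar A^{(i)}-\mathbb{E}\,A^{(i)}}\le K(x)h^{3/2},\qquad
\abs{\mathbb{E}\,\bar A^{(i)}\bar A^{(j)}-\mathbb{E}\,A^{(i)}A^{(j)}}\le K(x)h^{3/2},
\]
\[
\mathbb{E}\big[\,\abs{\bar A^{(i)}}\,\abs{\bar A^{(j)}}\,\abs{\bar A^{(k)}}\,\big]\le K(x)h^{3/2},
\]
with $K$ of polynomial growth, together with polynomial moment bounds for the exact solution and for the Markov chain. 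Under the hypotheses the coefficients $a=\nabla D+D\nabla\ln\rho_{eq}$, $b=\sqrt{2D}$ of \eqref{sde1} and the test function $f$ have polynomial growth together with their derivatives, so these structural requirements hold; a standard It\^o--Taylor expansion gives $\mathbb{E}\,A^{(i)}=a^{(i)}(x)h+O(h^2)$ and $\mathbb{E}\,A^{(i)}A^{(j)}=2D(x)\delta_{ij}h+O(h^2)$, and the (sublinear) growth of $\sqrt{2D}$ gives the moment bound on the chain. Everything then reduces to the local moments of the Metropolized step.

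Conditioning on $X^*_{n+1}-x=\sqrt{2D(x)h}\,\xi$, $\xi\sim N(0,I_d)$, and on the independent uniform variable gives $\mathbb{E}\,\bar A^{(i)}=\sqrt{2D(x)h}\,\mathbb{E}_\xi[\xi_i\,\alpha_h]$ and $\mathbb{E}\,\bar A^{(i)}\bar A^{(j)}=2D(x)h\,\mathbb{E}_\xi[\xi_i\xi_j\,\alpha_h]$, where $\alpha_h=\min(1,e^{\zeta})$ and $\zeta=\zeta(x,\xi,\sqrt h)$ is the logarithm of the Metropolis ratio in \eqref{alpha} evaluated at $y=x+\sqrt{2D(x)h}\,\xi$. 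Substituting \eqref{qh}, $\zeta$ is a $C^4$ function of $u=\sqrt h$ that vanishes at $u=0$, and Taylor expansion yields $\zeta=\sqrt h\,Z_1(x,\xi)+\tfrac{h}{2}Z_2$ with
\[
Z_1(x,\xi)=\frac{1}{\sqrt{2D(x)}}\Big[(\abs{\xi}^2-d)\,\nabla D(x)\cdot\xi+2D(x)\,\nabla\ln\rho_{eq}(x)\cdot\xi\Big],
\]
an odd polynomial in $\xi$, and a remainder $Z_2$ bounded by a polynomial in $x$ times a polynomial in $\xi$ uniformly for small $h$; the boundedness of $\nabla^2\ln D$, $\nabla^2(1/D)$ and $\nabla^2\ln\rho_{eq}$, which follows from the hypotheses together with $\inf D>0$, keeps this $x$-dependence polynomial.

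The crucial and most delicate step is the linearization of the non-smooth acceptance probability: I would show
\[
\alpha_h=1-\sqrt h\,\big(Z_1(x,\xi)\big)_-+E,
\]
where $(\cdot)_-$ is the negative part and the error satisfies $\mathbb{E}_\xi[(1+\abs{\xi})^m\abs{E}]\le K_m(x)\,h$ for each fixed $m$, with $K_m$ of polynomial growth. On the region where $\abs{\zeta}$ is controlled one combines the elementary bounds $\abs{(1-e^\zeta)_+-(-\zeta)_+}\le\tfrac{1}{2}\zeta^2 e^{\abs{\zeta}}$ and $\big|(-\zeta)_+-(-\sqrt h\,Z_1)_+\big|\le\tfrac{h}{2}\abs{Z_2}$ (using that $t\mapsto t_+$ is $1$-Lipschitz) to bound the discrepancy by $h$ times a polynomial; on the complement one uses only the crude bound $0\le(1-e^\zeta)_+\le1$ together with a Gaussian tail estimate showing that this region carries probability smaller than any power of $h$. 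Carrying the polynomial $x$-dependence through this region decomposition, since $D(x)$ and $\nabla\ln\rho_{eq}(x)$ enter $\zeta$, is where the bulk of the technical work lies, and I expect this bookkeeping --- rather than any single idea --- to be the main obstacle.

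Finally, inserting this expansion into the two moment formulas, the error term $E$ contributes only $O(h^{3/2})$. For the leading contribution, the oddness of $Z_1$ together with the change of variables $\xi\mapsto-\xi$ gives $\mathbb{E}[\xi_i(Z_1)_-]=-\tfrac{1}{2}\mathbb{E}[\xi_i Z_1]$ and $\mathbb{E}[\xi_i\xi_j(Z_1)_-]=\tfrac{1}{2}\mathbb{E}[\xi_i\xi_j\abs{Z_1}]$; with the Gaussian moments $\mathbb{E}[\xi_i\xi_j]=\delta_{ij}$ and $\mathbb{E}[\xi_i\xi_j(\abs{\xi}^2-d)]=2\delta_{ij}$ one gets $\mathbb{E}[\xi_i Z_1]=\tfrac{2}{\sqrt{2D(x)}}\,a^{(i)}(x)$, so that $\mathbb{E}\,\bar A^{(i)}=a^{(i)}(x)h+O(h^{3/2})$, while the $\sqrt h$ correction to the second moment is itself $O(h^{3/2})$, giving $\mathbb{E}\,\bar A^{(i)}\bar A^{(j)}=2D(x)\delta_{ij}h+O(h^{3/2})$; the third absolute moment is at most $(2D(x)h)^{3/2}\mathbb{E}[\abs{\xi_i}\abs{\xi_j}\abs{\xi_k}]=O(h^{3/2})$. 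Comparing with the exact local moments established above verifies the hypotheses of the reduction theorem, which then yields $\abs{\mathbb{E}(f(X(t_k))-f(X_k))}\le Ch^{1/2}$ uniformly in $k$.
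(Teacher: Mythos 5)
Your proposal is correct and follows essentially the same route as the paper: reduction to Milstein's fundamental theorem via one-step moment estimates, rescaling of the proposal increment so the Gaussian reference measure is $h$-independent, a first-order Taylor expansion of the log-acceptance ratio with the Lipschitz property of $\min(1,e^{\cdot})$ controlling the remainder, and an odd-symmetry argument to evaluate the leading correction (your identity $\mathbb{E}[\xi_i (Z_1)_-]=-\tfrac12\mathbb{E}[\xi_i Z_1]$ is exactly the paper's ``odd integrand over the symmetric half-domain equals half the full integral'' step in Lemma 3, and your $Z_1$ matches their expansion after the change of variables). The only cosmetic differences are that you normalize to a standard Gaussian and write the correction as $-\sqrt{h}(Z_1)_-$, and that your bound $\tfrac12\zeta^2 e^{\abs{\zeta}}$ forces an extra region decomposition that the paper avoids by noting the relevant quantities are supported where the exponent is negative.
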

{\bf Remark:} The first condition in Theorem \ref{mainthm} is made in terms of $D$ and $\rho_{eq}$ to fit the framework of SDE in \eqref{sde1}. The same result can be obtained under a weaker condition if one impose the smoothness in terms of the coefficients $a$ and $b$ in \eqref{eqn:sde0}, i.e., when the coefficients $a(x)=\nabla D(x) + D(x) \nabla \ln \rho_{eq}(x)$ and $b(x) = \sqrt{2 D(x)}$ of equation \eqref{sde1} are continuous, satisfy a Lipschitz condition
\[
\abs{a(x)-a(y)}+\abs{b(x)-b(y)}\leq K \abs{x-y}
\]
and together with their partial derivatives with respect to $x$ of order up to and including $3$ have at most polynomial growth.

We prove Theorem~\ref{mainthm} by analyzing the local error of the scheme.
In the following estimates of the local error, we use the same techniques as \cite{bourabee2014},  making  precise the  dependence of the remainder term on $x$ in order to guarantee global convergence. We also have a slightly less restrictive condition on $\rho_{eq}(x)$ in that derivatives of $\ln \rho_{eq}(x)$ do not need to be bounded.
\begin{proof}
We are going to apply Theorem 2.1 of \cite[p. 100]{milstein} to show the weak convergence of the scheme. The condition (a)  of their theorem corresponds to our condition  1  which is the requirement on the smoothness and the growth of the coefficients $a$ and $b$.  The condition (c) there corresponds to our condition 3  which is the requirement on the smoothness and the growth of the test function $f$. Their condition (d)  is a uniform \emph{a priori} bound  on the moments of the numerical scheme which is  guaranteed by our Lemma \ref{momentsbound}.
What remains to be shown is their condition (b): bounds on the moments of the increments of the numerical method. For convenience, we use $\mathcal{O}(x,h^p)$ to denote a quantity that can be bounded by $K_1(x)h^{p}$ where $K_1(x)$ is some polynomial or a matrix of polynomial entries.\\
The condition (b) in Theorem 2.1 of \cite[p. 100]{milstein} has two requirements. Firstly, all the third moments of the increment in the numerical scheme must be $\mathcal{O}(X_n,h^{3/2})$, i.e.
\[
\mathbb{E}_{X_n}\left(\prod_{j=1}^{3}\abs{\bar A^{i_j}}\right) \leq K_1(X_n) h^{3/2}
\]
Here $\bar A^{i_j}$ is the $i_j \in \{1,..,d\} $ component of $\bar A$ and $K_1(x)$ is a function with at most polynomial growth.
Then, the difference between the first and second moments of the approximated increment and the exact increment needs to be $\mathcal{O}(X_n,h^{3/2})$, i.e.
\[
\left| \mathbb{E}_{X_n}\left(\prod_{j=1}^{s} \bar A^{i_j} - \prod_{j=1}^{s} A^{i_j}\right) \right| \leq K_2(X_n) h^{3/2}, \ \ \ s=1,2
\]
Here $K_2(x)$ is also a function with at most polynomial growth.\\
For the first requirement, since
\[
\left|\bar{A}(X_n,h;X^*_{n+1},\xi_{n})\right|=\left|(X^*_{n+1}-X_n) \mathds{1}_{\xi_{n}<\alpha_h\left(X_n,X^*_{n+1}\right)}\right| \leq  \left| \sqrt{2D(X_n)}\left(B^{i_j}((n+1)h)-B^{i_j}(nh))\right) \right|
\]
therefore
\[
E_{X_n}\left(\prod_{j=1}^{3}\abs{\bar A^{i_j}}\right)\leq  (2D(X_n))^{3/2} h^{3/2}
\]
By the Lipschitz condition on $b(x)=\sqrt{2D(x)}$,  $(2D(X_n))^{3/2}$ will be bounded by some polynomial.
For the second requirement, consider the solution after one time step from the initial condition. Let $A(X(0),h)=X(h)-X(0) $ be a column vector of the increment of the exact solution.
\[
E_{X_0}(\bar A(X_0,h;X^*_{1},\xi_{1}) - A(X_0,h)) =\mathbb{E}_{X_0}((X_1^*-X_0)\alpha_h(X_0,X^*_1)-(X(h)-X_0))
\]
\[
E_{X_0}(\bar A(X_0,h;X^*_{1},\xi_{1})\bar A^T(X_0,h;X^*_{1}) - A(X_0,h)A^{T}(X_0,h))\]
\[
=\mathbb{E}_{X_0}((X_1^*-X_0)(X_1^*-X_0)^T\alpha_h(X_0,X^*_1)-(X(h)-X_0)(X(h)-X_0)^T)
\]

By Theorem \ref{conv}, we have
\[
\mathbb{E}_{X_0}((X_1^*-X_0)\alpha_h(X_0,X^*_1)) =a(X_0)h + \mathcal{O}(X_0,h^{3/2})\\
\]
\[
\mathbb{E}_{X_0}(X_1^*-X_0)(X_1^*-X_0)^T\alpha_h(X_0,X^*_1)=b^2(X_0)h I_d + \mathcal{O}(X_0,h^{3/2})\\
\]
Let $\mathcal{L}f(x)=a^{T}(x) \cdot \nabla_x f(x) + \frac{1}{2} b(x) \Delta_x f(x)$ be the infinitesimal generator of the It\^{o} diffusion \eqref{sde1}. By Ito-Taylor expansion \cite[p.99]{milstein} , we have the expansion componentwise
\begin{eqnarray}
\mathbb{E}_{X_0}(X(h)-X_0)^{i} =a^{i}(X_0)h +\mathbb{E}_{X_0} \left(h\int_{0}^{h} \mathcal{L}a^{i}(X(t))dt\right)
\label{exactremainder}
\end{eqnarray}
\begin{eqnarray}
&&\mathbb{E}_{X_0}((X(h)-X_0)(X(h)-X_0)^T)^{ij}=b^2(X_0)h I^{ij}_d+\nonumber \\
&&\mathbb{E}_{X_0} \left(h\int_{0}^{h} \mathcal{L} \left(a^{i}(X(t))\cdot (X^{j}(t)-X^j(0)) + a^{j}(X(t))\cdot(X^{i}(t)-X^{i}(0))+ \frac{1}{2}b^{2}(X(t))\right) dt\right)
\label{exactremainder22}
\end{eqnarray}
Since the integrands in the remainder terms in \eqref{exactremainder}  \eqref{exactremainder22} are combinations of products of $X,a,b$ and their derivatives, by assumptions on their growth, the integrands can only have at most polynomial growth in $X$. We can find $m$ large enough, s.t.
\[
\left|\mathbb{E}_{X_0} \left(h\int_{0}^{h} \mathcal{L}a^{i}(X(t))dt\right)\right|<h\mathbb{E}_{X_0}  \int_{0}^{h} C_1(1+\abs{X(t)}^{2m})dt\\
\]
for some constant $C_1$. The Theorem 4 in \cite[p. 48]{gihman}  shows that the moments of the solution could be uniformly bounded by the moments of the initial condtion, i.e.
\[
\mathbb{E}_{X_0}  \int_{0}^{h} C_1(1+\abs{X(t)}^{2m})dt \leq  h\mathbb{E}_{X_0}  C(1+\abs{X(0)}^{2m}) = h C(1+\abs{X(0)}^{2m})
\]
The constant $C$ in the last inequality only depends on $T$, $m$, $K$. The same process applies to the remainder in \eqref{exactremainder22}.
As a result, \eqref{exactremainder} \eqref{exactremainder22} becomes,
\begin{eqnarray}
\mathbb{E}_{X_0}(X(h)-X_0)^{i} &=&a^{i}(X_0)h + \mathcal{O}(X(0),h^2)\\
\mathbb{E}_{X_0}((X(h)-X_0)(X(h)-X_0)^T)^{ij}&=&b^2(X_0)h I^{ij}_d + \mathcal{O}(X(0),h^2) 
\label{exactremainder2}
\end{eqnarray}
Hence, we have the weak local error,
\[
\abs{E_{X_0}(\bar A(X_0,h;X^*_{1},\xi_{1}) - A(X_0,h))}\leq   \mathcal{O}(X(0),h^{3/2})
\]
\[
\abs{E_{X_0}(\bar A(X_0,h;X^*_{1},\xi_{1})\bar A^T(X_0,h;X^*_{1}) - A(X_0,h)A^{T}(X_0,h)) }\leq  \mathcal{O}(X(0),h^{3/2})
\]

Therefore, by Theorem 2.1 in \cite[p. 100]{milstein}, the method is convergent with order of accuracy $1/2$.
\end{proof}

\begin{lemma}
Suppose the assumptions in Theorem \ref{mainthm} are satisfied. Then for every even number $2m$ the $2m$-moment of the numerical solution $\mathbb{E}\abs{X_k}^{2m}$ exist and are uniformly bounded with respect to $k=1,...,N$, if and only if $\mathbb{E}\abs{X_0}^{2m}$ exists.
\label{momentsbound}
\end{lemma}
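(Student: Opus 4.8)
The plan is to establish a one-step recursive estimate of the form $\mathbb{E}\abs{X_{n+1}}^{2m} \leq (1 + Ch)\mathbb{E}\abs{X_n}^{2m} + Ch$, from which a discrete Gr\"onwall argument yields $\mathbb{E}\abs{X_k}^{2m} \leq e^{CT}(\mathbb{E}\abs{X_0}^{2m} + CT)$ uniformly in $k \leq N$, giving the ``if'' direction; the ``only if'' direction is immediate since $X_0$ is a single step of the chain (or simply note that existence of the $2m$-moment of $X_0$ is assumed as the hypothesis on the initial condition). To prove the recursion, I would condition on $X_n$ and bound $\mathbb{E}_{X_n}\abs{X_{n+1}}^{2m}$. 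Because of the Metropolis--Hastings rejection, $X_{n+1}$ is either $X_n$ (contributing exactly $\abs{X_n}^{2m}$) or the trial point $X^*_{n+1} = X_n + \sqrt{2D(X_n)}\,\Delta B_n$; in either case $\abs{X_{n+1}} \leq \abs{X_n} + \sqrt{2D(X_n)}\,\abs{\Delta B_n}$ pathwise, since the rejected move has zero displacement. Expanding $(\abs{X_n} + \sqrt{2D(X_n)}\abs{\Delta B_n})^{2m}$ by the binomial theorem and taking $\mathbb{E}_{X_n}$, the Gaussian increment $\Delta B_n$ has $\mathbb{E}\abs{\Delta B_n}^{j} = c_j h^{j/2}$, so the term with $j=0$ gives $\abs{X_n}^{2m}$, the term with $j=1$ gives something $O(h^{1/2})$ times $\abs{X_n}^{2m-1}\sqrt{2D(X_n)}$, and so on.

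The key point that makes the $h^{1/2}$ half-powers harmless is that $\sqrt{D}$ has at most linear growth: since $b(x) = \sqrt{2D(x)}$ is Lipschitz (as assumed, and as follows from the smoothness hypothesis in Theorem \ref{mainthm}), we have $\sqrt{2D(x)} \leq C(1 + \abs{x})$. Thus each binomial term with $j \geq 1$ is bounded by $C h^{j/2}(1+\abs{X_n})^{2m-j}(1+\abs{X_n})^j = Ch^{j/2}(1+\abs{X_n})^{2m}$. Summing over $j = 1, \ldots, 2m$ and using $h^{j/2} \leq h^{1/2} \leq $ const for the higher powers (or more carefully, collecting the genuinely $O(h)$ terms and absorbing the $O(h^{1/2})\abs{X_n}^{2m-1}$ cross term via Young's inequality $a^{2m-1}b \leq \frac{2m-1}{2m}a^{2m} + \frac{1}{2m}b^{2m}$ to trade the odd power for $\abs{X_n}^{2m}$ plus a constant) gives
\[
\mathbb{E}_{X_n}\abs{X_{n+1}}^{2m} \leq (1 + Ch)\abs{X_n}^{2m} + Ch,
\]
where $C$ depends on $m$, the Lipschitz constant of $b$, and $d$, but not on $n$ or $h$. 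Taking full expectations and iterating is the standard discrete Gr\"onwall step; uniformity in $k \leq N = T/h$ follows because $(1+Ch)^{N} \leq e^{CT}$.

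The main obstacle — really the only subtlety — is handling the odd-order binomial terms of the form $h^{1/2}\abs{X_n}^{2m-1}$, since a naive bound would leave an $h^{1/2}$ rather than an $h$ in front of a growing power of $\abs{X_n}$, which is too weak to close a Gr\"onwall estimate with a uniform constant. This is resolved cleanly by Young's inequality as indicated above: the $h^{1/2}\abs{X_n}^{2m-1}$ term is dominated by $\tfrac12 h \abs{X_n}^{2m} + \tfrac12 \abs{X_n}^{2m-2}\cdot(\text{const})$, and the lower-order polynomial pieces are in turn controlled by $1 + \abs{X_n}^{2m}$, so everything collapses into the desired $(1+Ch)\abs{X_n}^{2m} + Ch$ form. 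One should also note that the pathwise domination $\abs{X_{n+1}} \leq \abs{X_n} + \sqrt{2D(X_n)}\abs{\Delta B_n}$ makes the acceptance probability $\alpha_h$ — and hence its potentially complicated dependence on $D$ and $\rho_{eq}$ — completely irrelevant to this estimate, which is what keeps the argument short.
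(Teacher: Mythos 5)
There is a genuine gap, and it sits exactly where you declare the problem solved. The pathwise domination $\abs{X_{n+1}} \leq \abs{X_n} + \sqrt{2D(X_n)}\,\abs{\Delta B_n}$ discards the sign of the Gaussian increment, so your $j=1$ binomial term is $2m\abs{X_n}^{2m-1}\sqrt{2D(X_n)}\,\mathbb{E}\abs{\Delta B_n}$, which after using the linear growth of $\sqrt{2D}$ is of size $h^{1/2}(1+\abs{X_n})^{2m}$, and no Young-type splitting turns this into $Ch(1+\abs{X_n})^{2m}+Ch$. Your proposed inequality $h^{1/2}u^{2m-1}\leq \tfrac12 h u^{2m}+\tfrac12 u^{2m-2}$ is true, but it leaves the piece $\tfrac12 u^{2m-2}$ (and, once you multiply by the linear growth of $\sqrt{2D}$, also a piece $Ch^{1/2}u^{2m}$) carrying no factor of $h$; any per-step multiplicative or additive contribution of size $\mathcal{O}(1)$ or $\mathcal{O}(h^{1/2})$ compounds over $N=T/h$ steps into something unbounded as $h\to 0$. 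Indeed the bound $h^{1/2}u^{2m-1}\leq Chu^{2m}+Ch$ with $C$ independent of $h$ is simply false (test $u=h^{-1/4}$), so the recursion $(1+Ch)\mathbb{E}\abs{X_n}^{2m}+Ch$ cannot be reached from the absolute-value bound alone.

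The missing ingredient is precisely the one you dismiss as irrelevant: the acceptance probability. What controls the linear term is the cancellation in the \emph{signed} conditional first moment, $\mathbb{E}_{X_n}\bar A=\mathbb{E}_{X_n}\bigl[(X^*_{n+1}-X_n)\alpha_h(X_n,X^*_{n+1})\bigr]=a(X_n)h+\mathcal{O}(X_n,h^{3/2})$, which is $\mathcal{O}(h)(1+\abs{X_n})$ rather than $\mathcal{O}(h^{1/2})$; this is Theorem \ref{conv}, and it rests on the expansion of $\alpha_h$ about $1$ in Lemmas \ref{alphaestimate} and \ref{driftcoeff}. With it, expanding $\abs{X_n+\bar A}^{2m}=(\abs{X_n}^2+2X_n\cdot\bar A+\abs{\bar A}^2)^m$ and conditioning on $X_n$, the term linear in $\bar A$ contributes $2m\abs{X_n}^{2m-2}X_n\cdot\mathbb{E}_{X_n}\bar A=\mathcal{O}(h)(1+\abs{X_n})^{2m}$, and every remaining term carries at least $\mathbb{E}_{X_n}\abs{\bar A}^2=\mathcal{O}(h)$, so the Gr\"onwall recursion closes. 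These are exactly the two hypotheses of Lemma 2.2 of Milstein that the paper's proof verifies --- $\abs{\mathbb{E}_{X_n}\bar A}\leq K(1+\abs{X_n})h$ together with $\abs{\bar A}\leq M_n\sqrt{2D(X_n)h}$ where $M_n$ is standard normal and hence has moments of all orders --- and your argument supplies only the second of them. To repair your proof you must reinstate the first-moment estimate, which means analyzing $\alpha_h$.
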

\begin{proof}
The result follows from Lemma 2.2 in \cite{milstein}, if the magnitude of $\bar A$ in one step is well-behaved. By using Theorem \ref{conv}, the expectation of $A$ is of order $h$
\begin{eqnarray*}
&&\abs{\mathbb{E}_{X_n}\bar A(X_{n},h;X^*_{n+1},\xi_{n+1})}=\left|\mathbb{E}_{X_n}\left((X^*_{n+1}-X_n) \mathds{1}_{\xi_{n}<\alpha_h(X_n,X^*_{n+1})}\right)\right|\\
&&=\abs{\mathbb{E}_{X_n}\left((X^*_{n+1}-X_n) \alpha_h(X_n,X^*_{n+1})\right)} \leq  K(1+\abs{X_n})h
\end{eqnarray*}
while $\abs{\bar A}$ is of order $h^{1/2}$
\[
\abs{\bar A(X_{n},h;X^*_{n+1},\xi_{n+1})}\leq \abs{X^*_{n+1}-X_n} \leq  \left|\frac{X^*_{n+1}-X_n}{\sqrt{2D(X_n)h}} \right| \sqrt{2D(X_n)h}
\]
and $\frac{X^*_{n+1}-X_n}{\sqrt{2D(X_n)h}} $ satisfies the standard normal distribution and hence has moments of all orders. Then by Lemma 2.2 in \cite{milstein}, the moments of the numerical solution $\mathbb{E}\abs{X_k}^{2m}$ exist and are uniformly bounded.
\end{proof}

\begin{theorem}
With the definitions and assumptions in Theorem \ref{mainthm}, we have the following,
\begin{eqnarray*}
&&\mathbb{E}_{X_0}(X_1^*-X_0)\alpha_h(X_0,X^*_1)=a(X_0)h + \mathcal{O}(X_0,h^{3/2})\\
&&\mathbb{E}_{X_0}(X_1^*-X_0)(X_1^*-X_0)^T\alpha_h(X_0,X^*_1)=b^2(X_0)h I_d + \mathcal{O}(X_0, h^{3/2})
\end{eqnarray*}
\label{conv}
\end{theorem}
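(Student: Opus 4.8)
The plan is to reduce everything to the analysis of one step from a fixed point $X_0=x$. Write $\Delta:=X_1^*-X_0=\sqrt{2D(x)}\,(B(h)-B(0))=\sqrt{2D(x)h}\,\zeta$ with $\zeta$ a standard $d$-dimensional Gaussian, and $\alpha_h(X_0,X_1^*)=\min(1,e^{R})$ where, from \eqref{alpha}--\eqref{qh},
\[
R=\tfrac{d}{2}\ln\tfrac{D(x)}{D(x+\Delta)}+\tfrac{|\Delta|^2}{4h}\Big(\tfrac1{D(x)}-\tfrac1{D(x+\Delta)}\Big)+\ln\tfrac{\rho_{eq}(x+\Delta)}{\rho_{eq}(x)}.
\]
Using Taylor's theorem on $\ln D$, $1/D$ and $\ln\rho_{eq}$, and the exact identity $|\Delta|^2/(4h)=\tfrac12 D(x)|\zeta|^2$, I would organize $R$ by powers of $h^{1/2}$ as $R=\mu_1(\zeta)\sqrt h+r$, where the leading term is
\[
\mu_1(\zeta)=\sqrt{2D(x)}\,\Big(\nabla\ln\rho_{eq}(x)+\tfrac{|\zeta|^2-d}{2}\,\nabla\ln D(x)\Big)\cdot\zeta,
\]
and the remainder satisfies $|r|\le C(x)(1+|\zeta|^q)h$ for a fixed integer $q$ and a function $C(x)$ of polynomial growth (this uses $\inf D>0$ together with the bounded/polynomially-bounded derivatives of $D$ and $\ln\rho_{eq}$ in the hypotheses). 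Keeping the $x$-dependence of $C$ as an explicit polynomial is essential, because it is exactly what makes all error terms integrate against the Gaussian law of $\zeta$ into the $\mathcal O(x,h^{3/2})$ bounds of the statement. A short moment computation, using $\mathbb E[\zeta_i\zeta_j]=\delta_{ij}$ and $\mathbb E[(|\zeta|^2-d)\zeta_i\zeta_j]=2\delta_{ij}$, then yields the key identity $\tfrac12\,\mathbb E[\Delta\,\mu_1(\zeta)]=\sqrt h\,\bigl(D(x)\nabla\ln\rho_{eq}(x)+\nabla D(x)\bigr)=\sqrt h\,a(x)$.

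For the first moment I would use $\mathbb E_{X_0}[\Delta]=0$ and $\min(1,e^R)=1-(1-e^R)\mathbf 1_{\{R<0\}}$ to write $\mathbb E_{X_0}[\Delta\,\alpha_h]=\mathbb E_{X_0}[\Delta\,(e^R-1)\mathbf 1_{\{R<0\}}]$. On $\{R<0\}$ one has $|e^R-1-R|\le R^2/2$ (since $e^{\theta R}\le1$ there), so replacing $e^R-1$ by $R$ and then $R$ by $\mu_1(\zeta)\sqrt h$ costs only $\mathcal O(x,h^{3/2})$. Next I would replace $\mathbf 1_{\{R<0\}}$ by $\mathbf 1_{\{\mu_1(\zeta)<0\}}$: on the symmetric difference $\{R<0\}\triangle\{\mu_1(\zeta)<0\}$ the signs of $R$ and $\mu_1(\zeta)$ disagree, which forces $|\mu_1(\zeta)|\sqrt h\le|r|$, hence $|\mu_1(\zeta)|\le C(x)(1+|\zeta|^q)\sqrt h$ there, and therefore $\sqrt h\,|\Delta\,\mu_1(\zeta)|$ is $\mathcal O(x,h^{3/2})$ pointwise on that set. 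What remains is $\sqrt h\,\mathbb E_{X_0}[\Delta\,\mu_1(\zeta)\,\mathbf 1_{\{\mu_1(\zeta)<0\}}]$, and since $\zeta\mapsto-\zeta$ leaves $\Delta\,\mu_1(\zeta)$ invariant while swapping $\{\mu_1<0\}$ with $\{\mu_1>0\}$, this equals $\tfrac{\sqrt h}{2}\,\mathbb E_{X_0}[\Delta\,\mu_1(\zeta)]=a(x)h$. Collecting the errors gives $\mathbb E_{X_0}[(X_1^*-X_0)\alpha_h(X_0,X_1^*)]=a(x)h+\mathcal O(x,h^{3/2})$.

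The second moment is easier because no Metropolis correction survives at leading order: $\mathbb E_{X_0}[\Delta\Delta^T]=2D(x)h\,I_d=b^2(x)h\,I_d$ exactly, so it suffices to bound $\mathbb E_{X_0}[\Delta\Delta^T(1-\alpha_h)]$. Since $e^R\ge 1+R$ we have $0\le 1-\alpha_h=1-\min(1,e^R)\le|R|\le C(x)(1+|\zeta|^q)\sqrt h$, while $|\Delta_i\Delta_j|\le 2D(x)h\,|\zeta|^2$; hence each entry of $\mathbb E_{X_0}[\Delta\Delta^T(1-\alpha_h)]$ is at most $C(x)h^{3/2}\,\mathbb E[(1+|\zeta|^q)|\zeta|^2]=\mathcal O(x,h^{3/2})$, which gives the second expansion.

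The step I expect to be the main obstacle is controlling the acceptance factor near $R=0$, where $\min(1,e^R)$ fails to be differentiable, so that a naive Taylor expansion of $\alpha_h$ is illegitimate. The device that resolves it is the one used above: the set on which $\operatorname{sign}(R)$ differs from $\operatorname{sign}(\mu_1(\zeta))$ is squeezed into a region where $|R|$, and hence $|\Delta\,(e^R-1)|$, is automatically one power of $h^{1/2}$ smaller, so the acceptance probability can be linearized about its leading Gaussian behaviour without ever differentiating the $\min$. The remaining work is bookkeeping — verifying that every implied constant grows at most polynomially in $x$ — which is where $\inf D>0$ and the boundedness hypotheses on the derivatives of $D$ and $\ln\rho_{eq}$ are used.
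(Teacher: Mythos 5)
Your proposal is correct and follows essentially the same route as the paper: the paper's Lemma \ref{alphaestimate} replaces $\alpha$ by $\beta=\min\bigl(1,e^{\mu_1(\zeta)\sqrt h}\bigr)$ and controls the difference by the Taylor remainder of the log-acceptance ratio, and its Lemma \ref{driftcoeff} uses exactly your half-space-support plus odd-symmetry argument to produce the factor $\tfrac12$ and hence $a(x)=\nabla D+D\nabla\ln\rho_{eq}$, with the second moment handled by the same crude $|1-\alpha_h|\le|R|$ bound. The only cosmetic difference is that where you linearize $e^R-1$ and then swap $\mathbf 1_{\{R<0\}}$ for $\mathbf 1_{\{\mu_1<0\}}$ via the symmetric-difference squeeze, the paper avoids the swap entirely by applying the global Lipschitz bound $|\min(1,e^A)-\min(1,e^B)|\le|A-B|$ to $\alpha-\beta$; both devices resolve the non-differentiability of the $\min$ at $R=0$ equally well.
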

\begin{proof}
For convenience, let $x=X_0$, $y=X_1^*$, and we can rewrite the conditional expectation in the integral form,
\begin{equation}
\mathbb{E}_{X_0}(X_1^*-X_0)\alpha_h(X_0,X^*_1)=\int_{\mathbb{R}^d}(y-x)\alpha_h(x,y)
q_h(x,y)dy
\label{drifteq}
\end{equation}
\begin{equation}
\mathbb{E}_{X_0}(X_1^*-X_0)(X_1^*-X_0)^T\alpha_h(X_0,X^*_1)=\int_{\mathbb{R}^d}(y-x)(y-x)^T\alpha_h(x,y)\cdot
q_h(x,y)dy
\label{diffeq}
\end{equation}
Introducing a change of variable, let $\epsilon=\sqrt{h}$, $y-x=\epsilon z$. Therefore the transition probability density changes into
\[
q_h(x,y)dy=\frac{1}{(\sqrt{4\pi h
D(x)})^d}e^{-\frac{(x-y)^2}{4hD(x)}}dy=\frac{1}{(\sqrt{4\pi
D(x)})^d}e^{-\frac{z^2}{4D(x)}}dz=:q(x,z)dz
\]
which is independent of  $\epsilon$.
Let

\[
\alpha(x,z,\epsilon)=\min\left(1,\frac{q(x+\epsilon z,z)\rho_{eq}(x+\epsilon z)}{q(x,z)\rho_{eq}(x)}\right)
\]
After the change of variable, \eqref{drifteq} and \eqref{diffeq} become,
\begin{equation}
\int_{\mathbb{R}^d}(y-x)\alpha_h(x,y)\cdot q_h(x,y)dy=\epsilon\int_{\mathbb{R}^d}z\alpha(x,z,\epsilon)q(x,z)dz
\label{drifteq2}
\end{equation}
\begin{equation}
\int_{\mathbb{R}^d}(y-x)(y-x)^T(\alpha_h(x,y))\cdot q_h(x,y)dy=\epsilon^2 \int_{\mathbb{R}^d}zz^T(\alpha(x,z,\epsilon))\cdot q(x,z)dz
\label{diffeq2}
\end{equation}
Let
\[
\beta(x,z,\epsilon)=\min \left(1,\exp\left(\epsilon \frac{\nabla_x q(x,z)\cdot z}{q(x,z)}+\epsilon \frac{\nabla_x \rho_{eq}(x) \cdot z}{\rho_{eq}(x)}\right)\right)
\]
be an approximation for $\alpha(x,z,\epsilon)$. The motivation of $\beta$ is discussed in Lemma \ref{alphaestimate}.
First we study the order of the error in drift. Applying the fact that $\int_{\mathbb{R}^d}zq(x,z)dz=0$ which follows from the symmetry of $q$, we obtain
\[
\epsilon\int_{\mathbb{R}^d}z\alpha(x,z,\epsilon)q(x,z)dz=\epsilon \int_{\mathbb{R}^d}z(\alpha(x,z,\epsilon)-1)q(x,z)dz.
\]
By Lemma \ref{alphaestimate} and Lemma \ref{driftcoeff}, we can obtain
\begin{eqnarray*}
&&\epsilon \int_{\mathbb{R}^d}z(\alpha(x,z,\epsilon)-1)q(x,z)dz=\epsilon \int_{\mathbb{R}^d}z(\beta(x,z,\epsilon)-1)q(x,z)dz +\epsilon \int_{\mathbb{R}^d}z (\alpha(x,z,\epsilon)-\beta(x,z,\epsilon))q(x,z)dz\\
&&=a(x)\epsilon^2 + \mathcal{O}(x,\epsilon^3)
\end{eqnarray*}
Use Lemma \ref{alphaestimate} and Lemma \ref{driftcoeff} for \eqref{diffeq2},
\begin{eqnarray*}
&&\epsilon^2 \int_{\mathbb{R}^d}zz^T(\alpha(x,z,\epsilon))\cdot q(x,z)dz=\\
&&\epsilon^2 \int_{\mathbb{R}^d}zz^T\cdot q(x,z)dz+\epsilon^2 \int_{\mathbb{R}^d}zz^T(\beta(x,z,\epsilon)-1)\cdot q(x,z)dz+\epsilon^2 \int_{\mathbb{R}^d}zz^T(\alpha(x,z,\epsilon)-\beta(x,z,\epsilon)) \cdot q(x,z)dz\\
&&=b(x) I^d \epsilon^2+ \mathcal{O}(x,\epsilon^3)\\
\end{eqnarray*}
Recall that $\epsilon=\sqrt{h}$, therefore we have the desired bounds for local error.
\end{proof}

\begin{lemma}[Estimates of $\alpha(x,z,\epsilon)$ and $\beta(x,z,\epsilon)$]

With previous definitions, we have the following estimates.
Let $g(z)\in \mathbb{R}$ be polynomial in $z$, then
\[
\left| \int_{\mathbb{R}^d}g(z)(\alpha(x,z,\epsilon)-\beta(x,z,\epsilon))q(x,z)dz \right| \leq K(x)\epsilon^2
\]
where $K(x)$ has polynomial growth.
\label{alphaestimate}

\end{lemma}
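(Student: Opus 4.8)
The plan is to recognize $\beta(x,z,\epsilon)$ as the first-order-in-$\epsilon$ linearization, in the exponent, of the Metropolis ratio defining $\alpha(x,z,\epsilon)$, so that $\alpha-\beta$ is controlled by a second-order Taylor remainder. Writing the acceptance ratio as $\frac{q(x+\epsilon z,z)\rho_{eq}(x+\epsilon z)}{q(x,z)\rho_{eq}(x)}=e^{\ell(x,z,\epsilon)}$ with
\[
\ell(x,z,\epsilon)=\bigl[\ln q(x+\epsilon z,z)-\ln q(x,z)\bigr]+\bigl[\ln\rho_{eq}(x+\epsilon z)-\ln\rho_{eq}(x)\bigr],
\]
so $\alpha=\min(1,e^{\ell})$, I would Taylor-expand each bracket in its first argument about $x$ with an exact second-order remainder. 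This gives $\ell=L+\epsilon^{2}r$, where
\[
L(x,z,\epsilon)=\epsilon\Bigl(\tfrac{\nabla_x q(x,z)\cdot z}{q(x,z)}+\tfrac{\nabla_x\rho_{eq}(x)\cdot z}{\rho_{eq}(x)}\Bigr)
\]
is exactly the exponent defining $\beta$ (so $\beta=\min(1,e^{L})$), and
\[
r(x,z,\epsilon)=\tfrac12\,z^{T}\nabla_x^2\bigl[\ln q(\tilde x,z)+\ln\rho_{eq}(\tilde x)\bigr]z
\]
for some $\tilde x$ on the segment between $x$ and $x+\epsilon z$.

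First I would make a Lipschitz reduction that sidesteps the fact that the ratio $e^{\ell}$ may be large: since $u\mapsto\min(1,e^{u})=e^{\min(u,0)}$ has derivative in $[0,1]$ everywhere, it is $1$-Lipschitz, so
\[
\abs{\alpha(x,z,\epsilon)-\beta(x,z,\epsilon)}=\bigl|\min(1,e^{L+\epsilon^{2}r})-\min(1,e^{L})\bigr|\le\epsilon^{2}\,\abs{r(x,z,\epsilon)}.
\]
Next I would bound $\abs{r}$ by a fixed polynomial in $z$, uniformly in $x$ and in $\epsilon$ (say $\epsilon\le 1$). From $\ln q(x,z)=c-\tfrac d2\ln D(x)-\tfrac{\abs z^{2}}{4D(x)}$ one gets $\nabla_x^2\ln q(x,z)=-\tfrac d2\nabla^2\ln D(x)-\tfrac{\abs z^{2}}{4}\nabla^2(1/D)(x)$, and using $\inf D=D_{\min}>0$ together with the boundedness of $\nabla D$, $\nabla^2 D$ and $\nabla^2\ln\rho_{eq}$ from the assumptions of Theorem~\ref{mainthm} yields $\norm{\nabla_x^2[\ln q(\cdot,z)+\ln\rho_{eq}]}\le C_1+C_2\abs z^{2}$ uniformly in the base point; hence $\abs{r(x,z,\epsilon)}\le\tfrac12\abs z^{2}(C_1+C_2\abs z^{2})=:P_0(z)$, the unknown position of $\tilde x$ being harmless because the bound is uniform.

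Finally I would integrate: combining the two estimates,
\[
\Bigl|\int_{\mathbb{R}^{d}}g(z)\bigl(\alpha(x,z,\epsilon)-\beta(x,z,\epsilon)\bigr)q(x,z)\,dz\Bigr|\le\epsilon^{2}\int_{\mathbb{R}^{d}}\abs{g(z)}\,P_0(z)\,q(x,z)\,dz,
\]
and the remaining integral is evaluated by the substitution $z=\sqrt{2D(x)}\,w$, under which $q(x,z)\,dz$ becomes the standard $d$-dimensional Gaussian density; the result is a polynomial in $\sqrt{D(x)}$ with coefficients equal to finite Gaussian moments, hence bounded by a polynomial in $D(x)$, hence by a polynomial in $x$ since $D$ has at most linear growth. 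Taking $K(x)$ to be that polynomial proves the lemma. The main obstacle I anticipate is purely bookkeeping: ensuring that the $z$-dependence of the remainder stays genuinely polynomial (so it survives integration against the Gaussian $q(x,\cdot)$) while simultaneously extracting a clean polynomial-in-$x$ bound from the $x$-dependent variance of $q(x,\cdot)$. Once the Lipschitz reduction $\abs{\alpha-\beta}\le\epsilon^{2}\abs{r}$ is in place, the rest is routine Gaussian-moment estimation.
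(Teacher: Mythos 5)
Your proposal is correct and follows essentially the same route as the paper: Taylor-expand the log of the acceptance ratio so that $\beta$ is the first-order truncation, use the $1$-Lipschitz property of $u\mapsto\min(1,e^{u})$ to reduce $\abs{\alpha-\beta}$ to the second-order remainder, bound that remainder by a polynomial in $(x,z)$ times $\epsilon^{2}$ via the hypotheses on $D$ and $\ln\rho_{eq}$, and finish with Gaussian moment estimates. The only differences are cosmetic (Lagrange versus integral form of the Taylor remainder, and an explicit rescaling $z=\sqrt{2D(x)}\,w$ in place of the paper's tabulated absolute-moment formula).
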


\begin{proof}
Rewrite $\alpha$ in exponent form,
\[
\alpha(x,z,\epsilon)=\min\left(1,\exp\left( \ln \frac{q(x+\epsilon z,z)\rho_{eq}(x+\epsilon z)}{q(x,z)\rho_{eq}(x)} \right) \right)
\]
A Taylor expansion for the exponent about $\epsilon=0$ gives
\[
\frac{q(x+\epsilon z,z)\rho_{eq}(x+\epsilon z)}{q(x,z)\rho_{eq}(x)}=\exp\left(\epsilon \frac{\nabla_x q(x,z)}{q(x,z)}+\epsilon \frac{\nabla_x \rho_{eq}(x)}{\rho_{eq}(x)} + R(x,z,\epsilon) \right)
\]
Therefore $\beta$ is obtained by keeping only the leading order $\epsilon$ terms.
\[
\beta(x,z,\epsilon)=\min\left(1,\exp\left(\epsilon \frac{\nabla_x q(x,z)}{q(x,z)}+\epsilon \frac{\nabla_x \rho_{eq}(x)}{\rho_{eq}(x)}  \right)\right)
\]
$R(x,z,\epsilon)$ is the remainder given by
\begin{eqnarray*}
&&R(x,z,\epsilon)=\int_0^{\epsilon} \int_0^{\xi} \frac{\partial^2 \ln  \left( q(x+\eta z,z)\rho_{eq}(x+\eta z)\right)}{\partial \eta^2} d\eta d\xi\\
&&=\int_0^{\epsilon} \int_0^{\xi}   \left( -\frac{d}{2} \frac{(z^T   \nabla^2_w D(w)  z)}{D(w)} + \frac{d}{2} \frac{(z^T  \nabla_w D(w) )^2}{D^2(w)}+ \frac{z^2(z^T  \nabla^2_w D(w)  z) }{4D^2(w)}-\frac{z^2}{8D^3(w)}(z^T \cdot  \nabla_w D(w) )^2    \right)_{w=x+\eta z}    d\eta d\xi\\
&& +\int_0^{\epsilon} \int_0^{\xi} \left(  \frac{z^{T} \cdot \nabla^2_w \rho_{eq}(w) \cdot z}{\rho_{eq}(w)} - \frac{(z^{T} \cdot \nabla_w \rho_{eq}(w))^2}{\rho^2_{eq}(w)} \right)_{w=x+\epsilon z}  d\eta d\xi\\
\end{eqnarray*}
Consider the function $h(x)=\min(1,\exp(x))$. Since $h(x)$ is piecewise smooth, it is not hard to see that $h(x)$ is globally Lipschitz with Lipschitz constant $1$. Therefore,
\[
\abs{\alpha(x,z,\epsilon)-\beta(x,z,\epsilon)}\leq \abs{R(x,z,\epsilon)}
\]
Therefore, with the assumptions that $\inf D(x) >0$ , $\norm{\nabla^2 D(x)}$ bounded by some polynomial, $\norm{\nabla^2 \ln \rho_{eq}(x)}$ bounded by some polynomial, we obtain
\[
\abs{R(x,z,\epsilon)}\leq K_1(x,z) \epsilon^2
\]
Here $K_1(x,z)$ is some polynomial in $x,z$. Furthermore, since for fixed $x$, $q(x,z)$ is a multivariate Gaussian, we can calculate its absolute moments  \cite[p. 337]{table},
\begin{eqnarray*}
\int_{\mathbb{R}^d} \abs{z}^p q(x,z)dz=\left\{
\begin{array}{cc}
\frac{S^d}{2} (2D(x))^{p/2} (p-1)!! & \text{if $p$ is even}\\
\sqrt{\frac{2}{\pi}}\frac{S^d}{2} (2D(x))^{p/2} (p-1)!! & \text{if $p$ is odd}\\
\end{array}
\right.
\end{eqnarray*}
where $S^d$ is the surface area of the unit hypersphere in $\mathbb{R}^d$. Since $b(x)=\sqrt{2D(x)}$ has at most polynomial growth, therefore,
\[
\left| \int_{\mathbb{R}^d} g(z) (\alpha(x,z,\epsilon)-\beta(x,z,\epsilon))q(x,z)dz \right| \leq K(x)\epsilon^2
\]
For $K(x)$ has at most polynomial growth.
\end{proof}
\begin{lemma}
With previous definitions,
\begin{eqnarray}
&&\left|\int_{\mathbb{R}^d}zq(x,z) (\beta(x,z,\epsilon)-1) dz-a(x)\epsilon \right| \leq K(x)\epsilon^2\\
&&\left|\int_{\mathbb{R}^d}g(z)q(x,z) (\beta(x,z,\epsilon)-1) dz\right| \leq K^*(x)\epsilon
\label{l3eq2}
\end{eqnarray}

where $K(x)$,$K^*(x)$ are polynomials in $x$
\label{driftcoeff}
\end{lemma}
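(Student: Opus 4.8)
The plan is to exploit the parity of the Gaussian proposal kernel $q(x,\cdot)$ in $z$ together with a symmetrization of the integrand; this is the device that lets us sidestep the non-smoothness of $t\mapsto\min(1,e^{t})$ at the origin. First I would make the exponent appearing in $\beta$ explicit. Writing $w(x,z):=\frac{\nabla_x q(x,z)\cdot z}{q(x,z)}+\frac{\nabla_x\rho_{eq}(x)\cdot z}{\rho_{eq}(x)}$, so that $\beta(x,z,\epsilon)=\min\!\big(1,e^{\epsilon w(x,z)}\big)$, and using $\ln q(x,z)=-\frac{d}{2}\ln(4\pi D(x))-\frac{\abs{z}^2}{4D(x)}$, one obtains
\[
w(x,z)=\Big(-\tfrac{d}{2}\tfrac{\nabla D(x)}{D(x)}+\tfrac{\abs{z}^2\,\nabla D(x)}{4D^2(x)}+\nabla\ln\rho_{eq}(x)\Big)\cdot z .
\]
The structural observation I would then use is that $w(x,\cdot)$ is an \emph{odd} function of $z$ (each summand is an even function of $z$ times a linear function of $z$), while $q(x,\cdot)$ is even; moreover $\abs{w(x,z)}$ is dominated by a polynomial in $z$ whose coefficients have at most polynomial growth in $x$ under the hypotheses of Theorem~\ref{mainthm} (the condition $\inf D>0$ is what makes $D^{-1}$ and $D^{-2}$ harmless). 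All the integrals below are then absolutely convergent, being Gaussians against polynomials, so the manipulations are legitimate.

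For the first estimate I would set $I:=\int_{\mathbb{R}^d}z\,q(x,z)\big(\beta(x,z,\epsilon)-1\big)\,dz$ and symmetrize componentwise, averaging the integrand with its image under $z\mapsto-z$. Since $q$ is even and $w$ is odd, $\beta(x,-z,\epsilon)=\min(1,e^{-\epsilon w(x,z)})$, whence
\[
I=\tfrac12\int_{\mathbb{R}^d}z\,q(x,z)\,\psi\!\big(\epsilon w(x,z)\big)\,dz,\qquad \psi(t):=\min(1,e^{t})-\min(1,e^{-t}),
\]
and $\psi$ is an odd function. An elementary inequality valid for all real $t$ — reduced to $t\ge0$, where it reads $e^{-t}\le 1-t+\tfrac12 t^2$ and follows from the second-order Taylor expansion of $e^{-t}$ with Lagrange remainder — gives $\abs{\psi(t)-t}\le\tfrac12 t^2$. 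Hence $I=\tfrac{\epsilon}{2}\int z\,q(x,z)\,w(x,z)\,dz+E$ with $\abs{E}\le\tfrac{\epsilon^2}{4}\int \abs{z}\,w(x,z)^2\,q(x,z)\,dz$, and $E=\mathcal{O}(x,\epsilon^2)$ because $\abs{z}\,w^2$ is a polynomial in $z$ with coefficients of polynomial growth in $x$, integrated against $q(x,\cdot)$ whose absolute moments were already computed in the proof of Lemma~\ref{alphaestimate}. For the leading term, a direct Gaussian-moment computation — using $\int z_iz_j\,q\,dz=2D(x)\delta_{ij}$ and $\int z_iz_j\abs{z}^2\,q\,dz=(2D(x))^2(d+2)\delta_{ij}$ — collapses to $\int z\,q(x,z)\,w(x,z)\,dz=2\nabla D(x)+2D(x)\nabla\ln\rho_{eq}(x)=2a(x)$, so $\tfrac{\epsilon}{2}\int z\,q\,w\,dz=a(x)\epsilon$ and the first estimate holds with $K(x)=\tfrac14\int\abs{z}\,w(x,z)^2q(x,z)\,dz$, a polynomial in $x$.

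The second estimate requires no cancellation: from $\abs{\min(1,e^{t})-1}\le\abs{t}$ one gets the pointwise bound $\abs{\beta(x,z,\epsilon)-1}\le\epsilon\,\abs{w(x,z)}$, hence $\abs{\int g(z)\,q(x,z)(\beta-1)\,dz}\le\epsilon\int\abs{g(z)}\,\abs{w(x,z)}\,q(x,z)\,dz\le K^{*}(x)\,\epsilon$, again by the Gaussian absolute-moment formula, with $K^{*}$ polynomial in $x$.

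The main obstacle — indeed the only delicate point — is that $\beta$ is not differentiable in $\epsilon$ at $\epsilon=0$, so it cannot simply be Taylor-expanded to second order. The symmetrization replaces $\min(1,e^{t})$ by the odd function $\psi$, which is ``half an order smoother'' at the origin: its residual non-smoothness appears only at quadratic order and, being multiplied by the odd factor $z$ and the even weight $q$, lands exactly inside the $\mathcal{O}(\epsilon^{2})$ budget we are allowed. The one arithmetic point worth checking with care is the identity $\int z\,q(x,z)w(x,z)\,dz=2a(x)$, which forces the leading coefficient to be precisely the drift $a(x)=\nabla D(x)+D(x)\nabla\ln\rho_{eq}(x)$ of \eqref{sde1}; this is where the specific form of $a$ dictated by detailed balance enters, and it is the consistency check linking the present lemma to Theorem~\ref{conv}.
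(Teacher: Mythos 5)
Your proof is correct and rests on the same mechanism as the paper's: the oddness of the exponent $w(x,z)$ in $z$ together with the evenness of $q(x,\cdot)$ reduces the leading term to $\tfrac{\epsilon}{2}\int z\,q\,w\,dz$, which the Gaussian moment identities collapse to $a(x)\epsilon$, while the remainder and the second estimate are controlled by Gaussian absolute moments exactly as in the paper. The only difference is packaging: where you symmetrize the integrand globally and use $\abs{\psi(t)-t}\le\tfrac12 t^2$ for the odd function $\psi(t)=\min(1,e^{t})-\min(1,e^{-t})$, the paper writes $\beta-1=(e^{\epsilon w}-1)\mathds{1}_{\{w<0\}}$, Taylor-expands with Lagrange remainder on that half-space, and uses that the integral of the even function $z\,q\,w$ over $\{w<0\}$ is half the integral over $\mathbb{R}^d$ — the same symmetry, expressed through the domain rather than the integrand.
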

\begin{proof}
Since
\begin{eqnarray*}
\beta(x,z,\epsilon)-1=\min \left(0,\exp\left(\epsilon \frac{\nabla_x q(x,z)\cdot z}{q(x,z)}+\epsilon \frac{\nabla_x \rho_{eq}(x) \cdot z}{\rho_{eq}(x)}\right)-1\right)\\
= \left(\exp\left(\epsilon \frac{\nabla_x q(x,z)\cdot z}{q(x,z)}+\epsilon \frac{\nabla_x \rho_{eq}(x) \cdot z}{\rho_{eq}(x)}\right)-1\right)\mathds{1}_{\Omega_{(x,0)}}
\end{eqnarray*}

Here the region is defined by,
\[
\Omega_{(x,0)}=\left\{z\left|\left(\exp\left(\epsilon \frac{\nabla_x q(x,z)\cdot z}{q(x,z)}+\epsilon \frac{\nabla_x \rho_{eq}(x) \cdot z}{\rho_{eq}(x)}\right)-1\right)<0\right\}\right.=\left\{z\left|\left( \frac{\nabla_x q(x,z)\cdot z}{q(x,z)}+ \frac{\nabla_x \rho_{eq}(x) \cdot z}{\rho_{eq}(x)}\right)<0\right\}\right.
\]
Therefore we can expand $\beta(x,z,\epsilon)-1$ in the integrand in domain $\Omega_{(x,0)}$ about $\epsilon=0$
\begin{eqnarray*}
&&\int_{\mathbb{R}^d}zq(x,z)(\beta(x,z,\epsilon)-1)dz=\int_{\Omega_{(x,0)}} zq(x,z)\left(\exp\left(\epsilon \frac{\nabla_x q(x,z)\cdot z}{q(x,z)}+\epsilon \frac{\nabla_x \rho_{eq}(x) \cdot z}{\rho_{eq}(x)}\right)-1\right)dz\\
&&=\int_{\Omega_{(x,0)}} zq(x,z)\left( \epsilon \frac{\nabla_x q(x,z)\cdot z}{q(x,z)}+\epsilon \frac{\nabla_x \rho_{eq}(x) \cdot z}{\rho_{eq}(x)}  + \epsilon^2 R(x,z,\xi(\epsilon))  \right )dz
\end{eqnarray*}
where $\epsilon^2 R$ is the remainder given by,
\[
R(x,z,\xi(\epsilon))=\exp\left(\xi \frac{\nabla_x q(x,z)\cdot z}{q(x,z)}+\xi \frac{\nabla_x \rho_{eq}(x) \cdot z}{\rho_{eq}(x)}\right)\left(  \frac{\nabla_x q(x,z)\cdot z}{q(x,z)}+ \frac{\nabla_x \rho_{eq}(x) \cdot z}{\rho_{eq}(x)}\right)^2
\]
with $0<\xi(\epsilon)<\epsilon$. Notice that
\[
zq(x,z)\left( \epsilon \frac{\nabla_x q(x,z)\cdot z}{q(x,z)}+\epsilon \frac{\nabla_x \rho_{eq}(x) \cdot z}{\rho_{eq}(x)} \right)
\]
is a odd function in $z$. On the other hand the integral domain $\Omega_{(x,0)}$ is odd. Hence the integral without $\epsilon^2R$ term becomes
\begin{eqnarray*}
&&\int_{\Omega_{(x,0)}} zq(x,z)\left( \epsilon \frac{\nabla_x q(x,z)\cdot z}{q(x,z)}+\epsilon \frac{\nabla_x \rho_{eq}(x) \cdot z}{\rho_{eq}(x)} \right )dz=
\frac{\epsilon}{2}\int_{\mathbb{R}^d}zq(x,z)\left(\epsilon \frac{\nabla_x q(x,z)\cdot z}{q(x,z)}+\epsilon \frac{\nabla_x \rho_{eq}(x) \cdot z}{\rho_{eq}(x)}\right)dz\\
&&=\frac{\epsilon}{2}\int_{\mathbb{R}^d}z(\nabla \ln(\rho_{eq}(x)) \cdot z -\frac{d}{2} \nabla \ln(D(x)) \cdot z -\frac{z^T z}{4} \nabla (\frac{1}{D(x)}) \cdot z)q(x,z)dz\\
&&=\left(\nabla D(x)+D(x)\nabla \ln
\rho_{eq}(x)\right) \epsilon=a(x)\epsilon
\end{eqnarray*}
Then we need to show the remainder term is indeed of order $\epsilon^2$. Since in the domain $\Omega_{(x,0)}$,  $\frac{\nabla_x q(x,z)\cdot z}{q(x,z)}+ \frac{\nabla_x \rho_{eq}(x) \cdot z}{\rho_{eq}(x)}<0$, therefore $\exp\left(\xi \frac{\nabla_x q(x,z)\cdot z}{q(x,z)}+\xi \frac{\nabla_x \rho_{eq}(x) \cdot z}{\rho_{eq}(x)}\right)<1$.
\begin{eqnarray*}
&&\left|\int_{\Omega_{(x,0)}} zq(x,z) R(x,z,\xi(\epsilon)) dz \right| =\\
&&\left|\int_{\Omega_{(x,0)}} zq(x,z)\left( \exp\left(\xi \frac{\nabla_x q(x,z)\cdot z}{q(x,z)}+\xi \frac{\nabla_x \rho_{eq}(x) \cdot z}{\rho_{eq}(x)}\right)\left(  \frac{\nabla_x q(x,z)\cdot z}{q(x,z)}+ \frac{\nabla_x \rho_{eq}(x) \cdot z}{\rho_{eq}(x)}\right)^2  \right )dz\right|\leq \\
&&\int_{\Omega_{(x,0)}} \abs{z}q(x,z)\left(  \frac{\nabla_x q(x,z)\cdot z}{q(x,z)}+ \frac{\nabla_x \rho_{eq}(x) \cdot z}{\rho_{eq}(x)}\right)^2 dz\leq\int_{\mathbb{R}^d} \abs{z}q(x,z)\left(  \frac{\nabla_x q(x,z)\cdot z}{q(x,z)}+ \frac{\nabla_x \rho_{eq}(x) \cdot z}{\rho_{eq}(x)}\right)^2 dz
\end{eqnarray*}
As shown in Lemma \ref{alphaestimate}, the term $ \abs{\frac{\nabla_x q(x,z)\cdot z}{q(x,z)}+ \frac{\nabla_x \rho_{eq}(x) \cdot z}{\rho_{eq}(x)}}$ could be bounded by a polynomial $K_1(x,z)$ and since the integral $\int_{\mathbb{R}^d} \abs{z}^p q(x,z)dz$ could be bounded by a polynomial $K_2(x)$, therefore there exists a polynomial $K(x)$
\[
\left|\int_{\Omega_{(x,0)}} zq(x,z) R(x,z,\xi(\epsilon)) dz \right| \leq K(x)
\]
A similar proof works for the other inequality \eqref{l3eq2}. By Taylor expansion,
\begin{eqnarray*}
&&\left|\int_{\mathbb{R}^d}g(z)q(x,z) (\beta(x,z,\epsilon)-1) dz\right|=\\
&&\epsilon \left|\int_{\Omega_{(x,0)}} g(z)q(x,z) \exp\left(\xi \frac{\nabla_x q(x,z)\cdot z}{q(x,z)}+\xi \frac{\nabla_x \rho_{eq}(x) \cdot z}{\rho_{eq}(x)}\right)\left(  \frac{\nabla_x q(x,z)\cdot z}{q(x,z)}+ \frac{\nabla_x \rho_{eq}(x) \cdot z}{\rho_{eq}(x)}\right) dz\right| \leq\\
&&\epsilon \int_{\mathbb{R}^d} \abs{g(z)}q(x,z) \left|  \frac{\nabla_x q(x,z)\cdot z}{q(x,z)}+ \frac{\nabla_x \rho_{eq}(x) \cdot z}{\rho_{eq}(x)}\right| dz\leq K^*(x)\epsilon
\end{eqnarray*}

which concludes the proof.
\end{proof}

\section{Numerical Simulations}
In this section we validate our method with the following numerical experiments. We chose 1-dimensional examples of \eqref{sde1} with  the following features:
\begin{enumerate}
\item Smooth diffusion coefficient $D$ and equilibrium density $\rho_{eq}$, for which we have an exact solution.
\item Smooth and periodic diffusion coefficient $D$ and equilibrium distribution $\rho_{eq}=1$.
\item Geometric brownian motion, for which we have a  degenerate $D$.
\item Piecewise constant $D$ and  $\rho_{eq}$.
\end{enumerate}
The motivation of these examples is to demonstrate the convergence of the numerical scheme for some problems not necessarily satisfying the conditions of Theorem \ref{mainthm}.
\subsection{Example 1: SDE with smooth coefficients. }
We first test the method on a SDE for which we have a closed-form solution,
\begin{eqnarray}
dX=-\frac{X}{2}dt+\sqrt{1-X^2}dB.
\label{eqex1}
\end{eqnarray}
Comparing with \eqref{sde1}, we can see that this is the case when the diffusion coefficient is
\[D(x)=\frac{1-x^2}{2}\]
and the equilibrium density is
 \[\rho_{eq}(x)=\frac{1}{\pi \sqrt{1-x^2}}\]
in the domain $\abs{x}<1$.
If the initial condition is $X(0)=\frac{1}{2}$, then it has the exact solution
\[
X(t)=\sin(B(t)+\frac{\pi}{6}).
\]
The equation \eqref{eqex1} does not satisfy the conditions of Theorem \ref{mainthm} because $D$ is not bounded away from zero and $\frac{d}{dx}\ln(\rho_{eq}(x))$ approaches infinity at $x=\pm1$.\\
Firstly, we numerically verify that this method keeps the exact equilibrium density $\rho_{eq}$ and approximates the given diffusion coefficient. To compute these statistical quantities of the trajectories, the domain $(-1,1)$ is cut into 20 equally spaced subintervals $[x_i,x_{i+1}), i=0,\ldots,19$. The density is computed by dividing the number of times that the particle is in the particular interval over the total number of timesteps. The effective diffusion coefficient is computed as in \cite{paulxin}:
\[
D(x_i)=\underset{X_{kh} \in [x_i,x_{i+1})}{\operatorname{mean}} \frac{(X_{(k+1)h}-X_{kh})^2}{2h}
\]
The SDE is simulated with different timestep lengths over a total time interval of length $T=1000$. With these parameters we plot the values of $\rho_{eq}(x)$ and $D(x)$ over the domain $\abs{x}<1$ in Figure \ref{rhodsinbt}. The error bars show estimates of standard error due to the finite time simulation. As we can see from Figure \ref{rhodsinbt},  the numerical method produces the correct distribution for all the timestep lengths, while the effective $D$ is converging to the exact curve as the time step length is decreasing.
\begin{figure}
\includegraphics[width=1\textwidth]{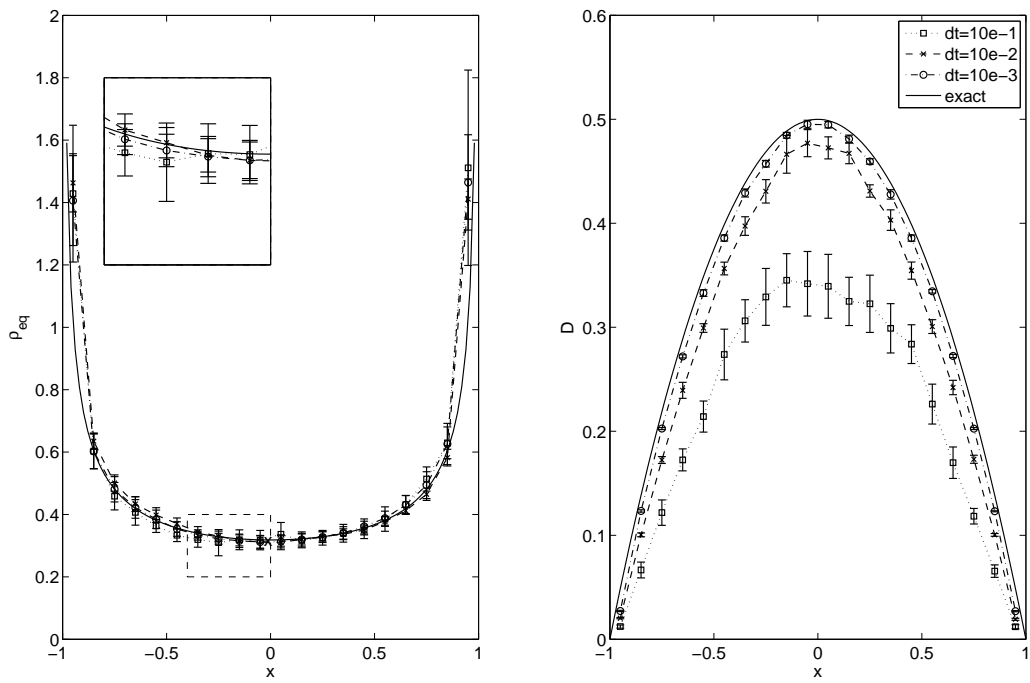}\\

\caption{Plot of the computed equilibrium density (left) and diffusion coefficient (right) for Example 1.}
\label{rhodsinbt}
\end{figure}

In order to check the weak accuracy of the numerical scheme, we measure the mean error at time $T=1$ with test function $f(x)$ as in \cite{higham2001},
\begin{equation}
\epsilon_{h}=\abs{\mathbb{E}(f(X_{Nh}))-\mathbb{E}(f(X(T)))}
\label{errexact}
\end{equation}
The expectation $\mathbb{E}(f(X_{Nh}))$ is approximated by the average values of $f(X_{Nh})$ over a number of $M=10^7$ trajectories.
Figure \ref{sinbt} shows the error versus the time step length with test functions $f(x)=x$ and $f(x)=x^2$. For these test functions, the exact solutions are $\mathbb{E}X(1)=\frac{1}{2\sqrt{e}}$, $\mathbb{E}(X(1))^2=\frac{1}{2}-\frac{1}{4e^2}$ The plot shows the accuracy is of order $1/2$.
\begin{figure}
\includegraphics[width=1\textwidth]{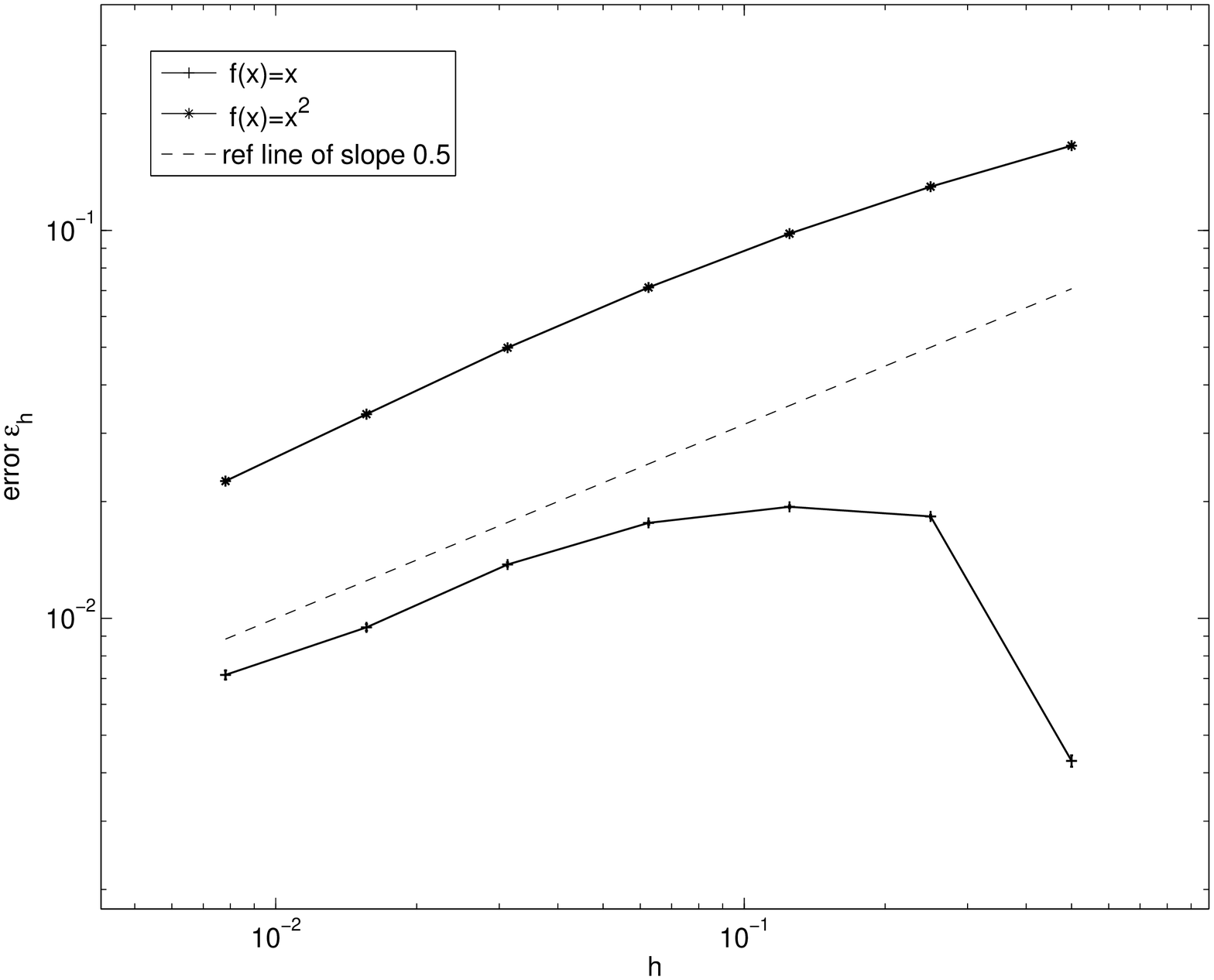}\\
\caption{The weak error of the scheme for Example 1 with test function $f(x)=x$ and $f(x)=x^2$.  The error bars in this plot are smaller than the symbol size. }
\label{sinbt}
\end{figure}

\subsection{Example 2: SDE with smooth coefficients. }
Here we consider the case with smooth diffusion coefficient $D=\sin(x)+2$ and uniform equilibrium distribution $\rho_{eq}=1$. Using \eqref{sde1}, this gives the SDE
\begin{eqnarray*}
dX(t)=\cos(X)dt+\sqrt{4+2\sin(x)}dB
\end{eqnarray*}
with initial condition $X(0)=0$. Here, $\rho_{eq}$ is not normalizable, therefore we do not have a probability density at equilibrium. However, computationally, since we only simulate to finite time, we can still look at the probability distribution of $X(T)$ and its expectation and  moments are well defined. For this SDE, since we do not have the exact solution, we measure the error by subtracting the results from time step length $h/2$ from $h$, i.e.
\begin{equation}
\epsilon_{h}=\abs{\mathbb{E}(f(X_h(T)))-\mathbb{E}(f(X_{h/2}(T)))}
\label{errrel}
\end{equation}
The expectation is approximated by the average over $M=\times 10^7$ trajectories. Figure \ref{sinx+2} shows the error plot compared with the error from Euler-Maruyama (EM) scheme. The EM method shows the expected weak accuracy of order 1. Our method shows the weak accuary of order $1/2$ for the test function $f(x)=x^2$. Furthermore, we observe super-convergence with apparent order 1 for test function $f(x)=x$. A closer look at the leading $\sqrt{h}$ term in the error shows that its coefficient in this case is comparably smaller than the next term due to the effect of $f(x)$ being odd. Therefore, when $h$ is not small enough, the error is dominated by the order $h$ term.

\begin{figure}
\includegraphics[width=1\textwidth]{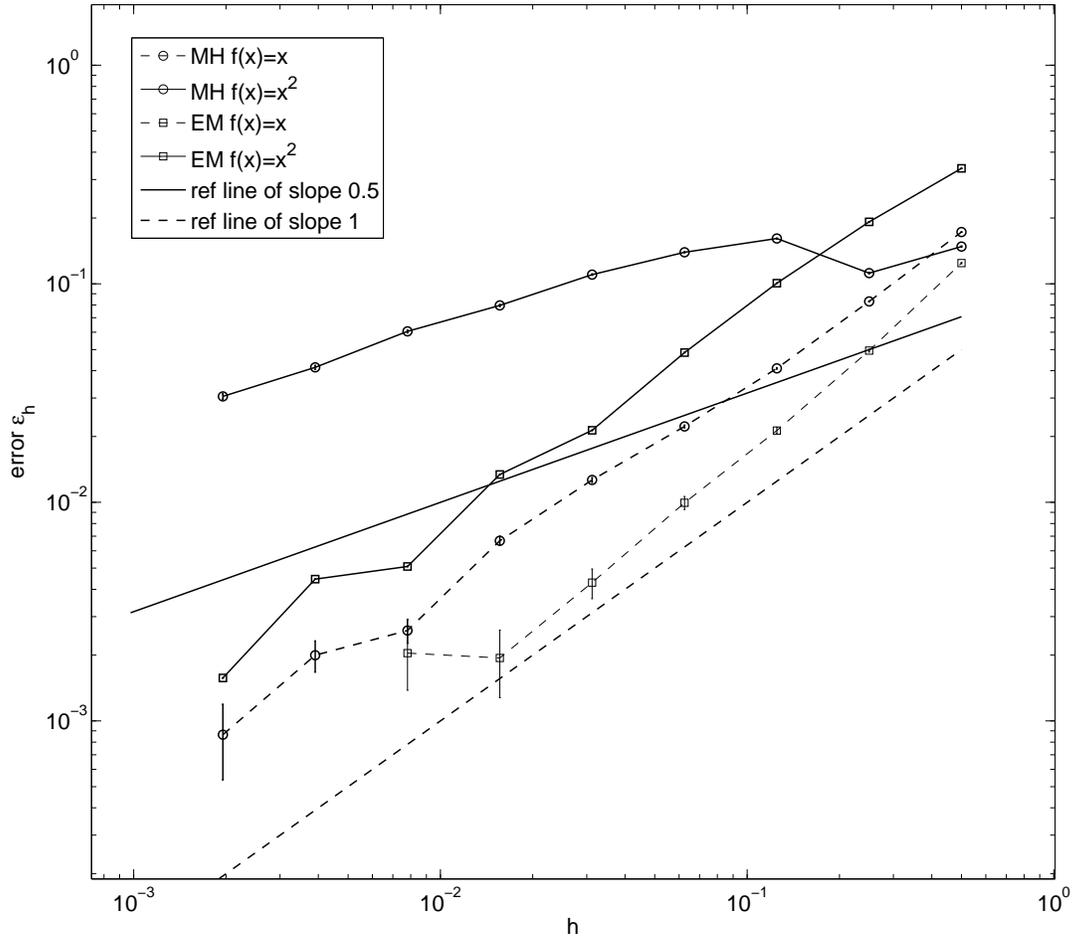}\\
\caption{The weak error of the numerical schemes: Euler-Maruyama (EM) and our scheme (MH) in Example 2 with test functions $f(x)=x,x^2$}
\label{sinx+2}
\end{figure}

\subsection{Example 3: Geometric Brownian Motion. }
For this example, we test our scheme on geometric brownian motion
\begin{eqnarray*}
dX(t)=aXdt+bXdB
\end{eqnarray*}
with $a=1$, $b=1$ are constants. The initial condition is $X_0=1$. We have the exact solution
\begin{equation*}
X(t)=X_0\exp\left( \left(a-\frac{b^2}{2} \right)t+bB(t)\right)=X_0\exp\left(\frac{1}{2}t+B(t)\right)
\end{equation*}
with expectation
\begin{equation*}
\mathbb{E}(X(t))=X_0\exp(t).
\end{equation*}
Firstly we need to rewrite the equation in the form of \eqref{sde1}. Notice that even though geometric brownian motion does not have an equilibrium density, we can still formally let
\begin{eqnarray*}
D=\frac{1}{2}X^2,\ \ \rho_{eq}=1
\end{eqnarray*}
to get the same form of SDE as we want.
Figure \ref{GBM} shows the weak error with test function $f(x)=x$ at time $t=T=1$ compared with the error from the Euler-Maruyama scheme.
The error is measured over  $M=5\times 10^5$ trajectories, using \eqref{errexact} and \eqref{errrel}.
\begin{figure}
\includegraphics[width=1\textwidth]{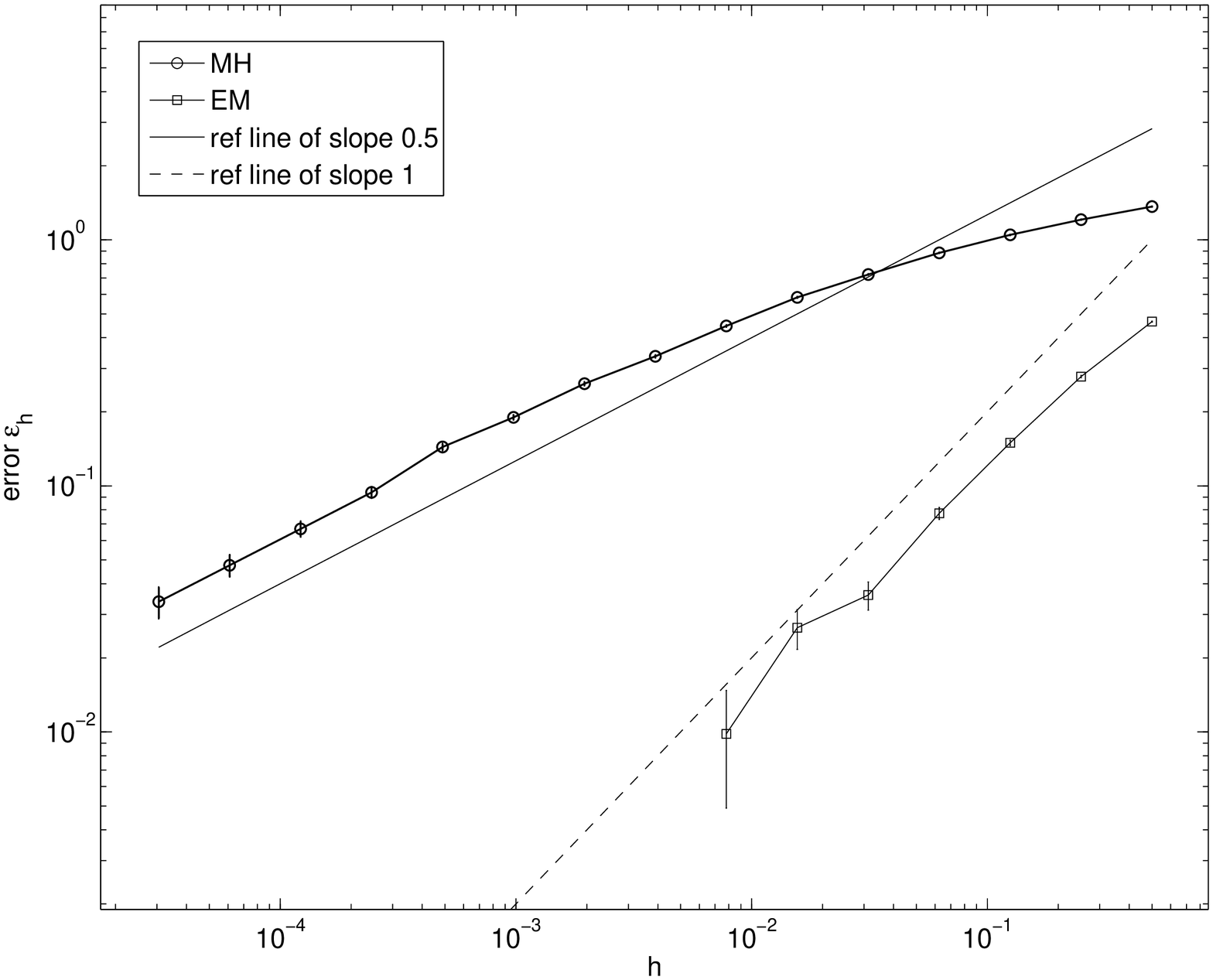}\\
\caption{The weak error of the numerical schemes: Euler-Maruyama (EM) and our scheme (MH) in Example 3 with test functions $f(x)=x$. Error is measured using  \eqref{errexact}.
}
\label{GBM}
\end{figure}
As a result, though geometric brownian motion does not satisfy the conditions in Theorem \ref{mainthm}, the numerical simulation still demonstrates that we can expect convergence in this case with weak accuracy of order $1/2$.

\subsection{Example 4: SDE with piecewise constant diffusion coefficient and equilibrium density.}

Here we study an SDE with equilibrium density $\rho_{eq}=0.5,\  -1<x<1$ and piecewise constant diffusion coefficient.
\begin{eqnarray*}
D(x)=\left\{
\begin{array}{cc}
2, & 1>x\geq0, \\
1, & -1<x<0.\\
\end{array}
\right.
\end{eqnarray*}
In \cite{paulxin}, we showed that our method keeps the correct diffusion coefficient and the exact equilibrium density with this equation. Here we demonstrate the weak convergence. The weak error in this example is calculated using formula
\[
\epsilon_{h}=\left| \mathbb{E}(f(X_h(T)))-\int_{x \in \mathbb{R}}f(x) \rho(x,T) dx \right|
\]
where $\rho(x,t)$ solves the corresponding Fokker-Plank equation,
\[
\frac{\partial \rho_{eq}(x,t)}{\partial t}=\frac{\partial}{\partial x} \left(D(x) \frac{\partial}{\partial x} \rho_{eq}(x,t)\right)
\]
with homogeneous Neumann boundary conditions $\frac{\partial}{\partial x} \rho_{eq}(x,t)=0$ at $x=\pm 1$ and initial condition $\rho(x,0)=\delta(x)$ where $\delta(x)$ is the delta distribution. This divergence form PDE is solved numerically using Crank-Nicolson(CN) scheme with a very fine mesh.
The expectation is approximated by averaging over $M=4\times 10^7$ trajectories. Figure \ref{discon} shows the convegence of the method with test functions $f(x)=x$ and $f(x)=x^2$. In each case we see order $1/2$ convergence despite the discontinuity of $D$ at $x=0$.

\begin{figure}
\includegraphics[width=1\textwidth]{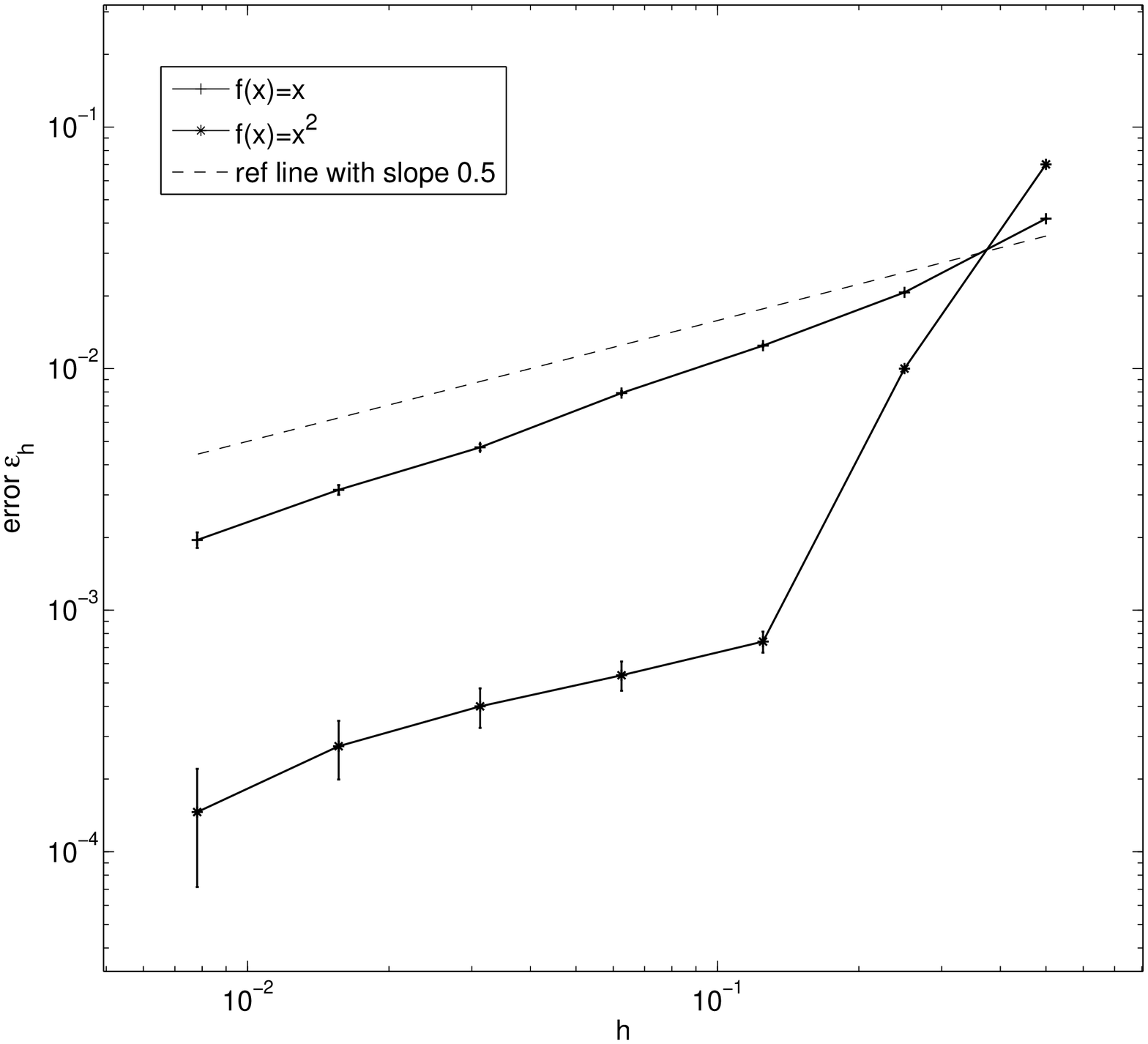}\\
\caption{The weak error of the numerical scheme in Example 4 with test functions $f(x)=x,x^2$.}
\label{discon}
\end{figure}

\bibliographystyle{agsm}	
\bibliography{myrefs}		
\end{document}